\newcommand {\wt}[1] {{\widetilde #1}}
\newcommand{\commentout}[1]{}
\newcommand{\R}{\mathbb{R}}
\newcommand{\N}{\mathbb{N}}
\newcommand{\1}{{\mathchoice {\rm 1\mskip-4mu l} {\rm 1\mskip-4mu l}
{\rm 1\mskip-4.5mu l} {\rm 1\mskip-5mu l}}}
\newcommand {\al} {\alpha}
\newcommand {\e}  {\varepsilon}
\newcommand {\vp} {\varphi}
\newcommand {\lb} {\lambda}
\newcommand {\Chi} {{\bf \raise 2pt \hbox{$\chi$}} }
\newcommand {\sgn} { {\rm sgn} }
\newcommand {\U} { {\mathcal U} }
\newcommand {\f}   {\frac}
\newcommand {\p}   {\partial}
\newcommand{\dis}{\displaystyle}
\newcommand{\beq}{\begin{equation}}
\newcommand{\beqa} {\begin{array}{rl}}
\newcommand{\eeq}{\end{equation}}
\newcommand{\eeqa}{\end{array}}
\newtheorem{theorem}{Theorem}
\newtheorem{lemma}{Lemma}
\newtheorem{remark}{Remark}
\newcommand{\qed}{{ \hfill
                       {\unskip\kern 6pt\penalty 500
                       \raise -2pt\hbox{\vrule\vbox to 6pt{\hrule width 6pt
                       \vfill\hrule}\vrule} \par}   }}
\title{\LARGE Eigenelements of a General Aggregation-Fragmentation Model.}
\author{Marie Doumic \thanks{INRIA Rocquencourt, projet BANG, Domaine de Voluceau, BP 105, F-78153 Rocquencourt, France.} \hspace{1cm} Pierre Gabriel \thanks{Universit\'e Pierre et Marie Curie-Paris 6, UMR 7598 LJLL, BC187, 4, place de Jussieu, F-75252 Paris cedex 5; corresponding author, email: gabriel@ann.jussieu.fr}}
\date{\today}
\begin{document}
\maketitle
\pagestyle{plain}
\pagenumbering{arabic}

\begin{abstract}
We consider a linear integro-differential equation which arises to describe both aggregation-fragmentation processes and cell division. We prove the existence of a solution $(\lb,\U,\phi)$ to the related eigenproblem. Such eigenelements are useful to study the long time asymptotic behaviour of solutions as well as the steady states when the equation is coupled with an ODE. Our study concerns a non-constant transport term that can vanish at $x=0,$ since it seems to be relevant to describe some biological processes like proteins aggregation. Non lower-bounded transport terms bring difficulties to find $a\ priori$ estimates. All the work of this paper is to solve this problem using weighted-norms.
\end{abstract}

\

\noindent{\bf Keywords} Aggregation-fragmentation equations, eigenproblem, size repartition, polymerization process, cell division, long-time asymptotic.

\

\noindent{\bf AMS Class. No.} 35A05, 35B40, 45C05, 45K05, 82D60, 92D25

\

\section{Introduction}

Competition between growth and fragmentation is a common phenomenon for a structured population. It arises for instance in a context of cell division (see, among many others, \cite{adimy, basse, Bekkal1, Bekkal2, Doumic, farkas, GW2, MetzDiekmann, PTou}), polymerization (see \cite{biben, destaing}), telecommunication (see \cite{Baccelli}) or neurosciences (see \cite{PPS}). It is also a mechanism which rules the proliferation of prion's proteins (see \cite{CL1, Greer, LW}). These proteins are responsible of spongiform encephalopaties and appear in the form of aggregates in infected cells. Such polymers grow attaching non infectious monomers and converting them into infectious ones. On the other hand they increase their number by splitting. 

To describe such phenomena, we write the following integro-differential equation,
\beq\label{eq:temporel}\left \{ \begin{array}{l}
\dis\dfrac{\partial}{\partial t} u(x,t) + \dfrac{\partial}{\partial x} \big(\tau(x) u(x,t)\big) + \beta(x) u(x,t) = 2 \int_{x}^{\infty} \beta(y) \kappa (x,y) \, u(y,t) \, dy, \qquad x \geqslant0,
\\
\\
u(x,0)=u_0(x),
\\
\\
u(0,t)=0.
\end{array}\right.\eeq
The function $u(x,t)$ represents the quantity of individuals (cells or polymers) of structured variable (size, protein content...) $x$ at time $t.$ These individuals grow (\emph{i.e.}, polymers aggregate monomers, or cells increase by nutrient uptake for instance) with the rate $\tau(x).$ Equation \eqref{eq:temporel} also takes into account the fragmentation of a polymer (or the division of a cell) of size $y$ into two smaller polymers of size $x$ and $y-x.$ This fragmentation occurs with a rate $\beta(y)$ and produce an aggregate of size $x$ with the rate $\kappa(x,y).$ Equation \eqref{eq:temporel} is a particular case of the more general one
\beq\label{eq:general}\dfrac{\p}{\p t} u(x,t) + \dfrac{\p}{\p x} \big(\tau(x) u(x,t)\big) + [\beta(x)+\mu(x)] u(x,t) = n \int_{x}^{\infty} \beta(y) \kappa (x,y) \, u(y,t) \, dy, \qquad x \geqslant x_0,\eeq
with the bound condition $u(x_0,t)=0$ (see \cite{Banasiak, CL1, LW}). Here, polymers are broken in an average of $n>1$ smaller ones by the fragmentation process, there is a death term $\mu(x)\geq0$ representing degradation, and a minimal size of polymers $x_0$ which can be positive. This more general model is biologically and mathematically relevant in the case of prion proliferation and is used in \cite{CL2, CL1, Greer, LW} with a coupling to an ODE. Our results remain true for this generalization.

A fundamental tool to study the asymptotic behaviour of the population when $t\to\infty$ is the existence of eigenelements ($\lb,\U,\phi$) solution of the equation

\beq
\label{eq:eigenproblem}
\left \{ \begin{array}{l}
\displaystyle \f{\p}{\p x} (\tau(x) \U(x)) + ( \beta (x) + \lb) \U(x) = 2 \int_x^\infty \beta(y)\kappa(x,y) \U(y) dy, \qquad x \geqslant0,
\\
\\
\tau\U(x=0)=0 ,\qquad \U(x)\geq0, \qquad \int_0^\infty \U(x)dx =1,
\\
\\
\displaystyle -\tau(x) \f{\p}{\p x} (\phi(x)) + ( \beta (x) + \lb) \phi(x) = 2 \beta(x) \int_0^x \kappa(y,x) \phi(y) dy, \qquad x \geqslant0,
\\
\\
\phi(x)\geq0, \qquad \int_0^\infty \phi(x)\U(x)dx =1.
\end{array} \right.
\eeq

For the first equation (equation on $\U$) we are looking for ${\mathcal D}'$ solutions defined as follows : $\U\in L^1(\R^+)$ is a ${\mathcal D}'$ solution if $\forall\varphi\in{\mathcal C}^\infty_c(\R^+),$
\beq\label{eq:D'eigenproblem}
\dis -\int_0^\infty\tau(x)\U(x)\p_x\varphi(x)\,dx + \lb\int_0^\infty\U(x)\varphi(x)\,dx =  \int_0^\infty\beta(x)\U(x)\Bigl(2\int_0^\infty\varphi(y)\kappa(y,x)\,dy-\varphi(x)\Bigr)\,dx.
\eeq
Concerning the dual equation, we are looking for a solution $\phi\in W^{1,\infty}_{loc}(0,\infty)$ such that the equality holds in $L^1_{loc}(0,\infty),\ i.e.$ almost everywhere.

When such elements exist, the asymptotic growth rate for a solution to \eqref{eq:temporel} is given by the first eigenvalue $\lb$ and the asymptotic shape is given by the corresponding eigenfunction $\U.$ More precisely, it is proved for a constant fragmentation rate $\beta$ that $u(x,t)e^{-\lb t}$ converges exponentially fast to $\rho\U(x)$ where $\rho=\int u_0(y)dy$ (see \cite{LP,PR}). For more general fragmentation rates, one can use the dual eigenfunction $\phi$ and the so-called ''General Relative Entropy`` method introduced in \cite{MMP1,BP}. It provides similar results but without the exponential convergence, namely that $$\int_0^\infty \bigl|u(y,t)e^{-\lambda t}-\langle u_0,\phi\rangle{\mathcal U}(y)\bigr|\phi(y)\,dy\underset{t\to\infty}{\longrightarrow}0$$ where $\langle u_0,\phi\rangle=\int u_0(y)\phi(y)dy$ (see \cite{MMP1,MMP2}).

The eigenvalue problem can also be used in nonlinear cases, such as prion proliferation equations, where there is a quadratic coupling of Equation \eqref{eq:temporel} or \eqref{eq:general} with a differential equation.  In \cite{CL2, CL1, Engler, Pruss} for instance, the stability of steady states is investigated. The use of entropy methods in the case of nonlinear problems remains however a challenging and widely open field (see \cite{PTum} for a recent review).

\

Existence and uniqueness of eigenelements has already been proved for general fragmentation kernels $\kappa(x,y)$ and fragmentation rates $\beta(x),$ but with very particular polymerization rates $\tau(x),$ namely constant ($\tau\equiv1$ in \cite{BP}), homogeneous ($\tau(x)=x^\mu$ in \cite{M1}) or with a compact support ($Supp\,\tau=[0,x_M]$ in \cite{Doumic}).

The aim of this article is to consider more general $\tau$ as \cite{CL1, Silveira} suggest. Indeed, there is no biological justification to consider specific shapes of $\tau$ in the case when $x$ represents a size (mass or volume) or some structuring variable and not the age of a cell (even in this last case it is not so clear that $\f{dx}{dt}=1,$ since biological clocks may exhibit time distorsions). For instance, for the prion proteins, the fact that the small aggregates are little infectious (see \cite{Lenuzza,Silveira}) leads us to include the case of rates vanishing at $x=0.$

Considering fully general growth rates is thus indispensable to take into account biological or physical phenomena in their full diversity. 
The proof of \cite{BP} can be adapted for non constant rates but still positive and bounded ($0<m<\tau(x)<M$). The paper \cite{M1} gives results for $\tau(0)=0,$ but for a very restricted class of shape for $\tau.$ The paper \cite{Doumic} gives results for $\tau$ with general shape in the case where there is also an age variable (integration in age then allows to recover Problem \eqref{eq:temporel}), but requires a compact support and regular parameters. Here we consider polymerization rates that can vanish at $x=0,$ with general shape and few regularity for the all parameters ($\tau,\ \beta$ and $\kappa$).

From a mathematical viewpoint, relaxing as far as possible the assumptions on the rates $\tau,\kappa,\beta,$ as we have done in this article, also leads to a better understanding of the intrinsic mechanisms driving the competition between growth and fragmentation.

\begin{theorem}[Existence and Uniqueness]\label{th:eigenelements}
Under assumptions \eqref{as:kappa1}-\eqref{as:betatauinf},
there exists a unique solution $(\lb,\U,\phi)$ (in the sense we have defined before) to the eigenproblem \eqref{eq:eigenproblem} and we have
$$\lb>0,$$
$$x^\al\tau\U\in L^p(\R^+),\quad\forall \al\geq-\gamma,\quad\forall p\in[1,\infty],$$
$$x^\al\tau\U\in W^{1,1}(\R^+),\quad\forall \al\geq0$$
$$\exists k>0\ s.t.\ \f{\phi}{1+x^k}\in L^\infty(\R^+),$$
$$\tau\f{\p}{\p x}\phi\in L_{loc}^\infty(\R^+).$$
\end{theorem}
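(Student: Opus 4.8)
The plan is to construct eigenelements by a compactness/truncation argument. First I would regularize and truncate the problem: replace the half-line by a bounded interval $[0,R]$ (or $[x_0,R]$), mollify the coefficients $\tau,\beta,\kappa$ so they become smooth with $\tau$ bounded below by a positive constant $\e$, and impose a suitable boundary condition at $x=R$. On this truncated problem the operator is (after the change of variables that straightens the transport) a compact perturbation of a nice generator, so Krein--Rutman applies and yields a triple $(\lb_{\e,R},\U_{\e,R},\phi_{\e,R})$ with $\U_{\e,R}>0$, $\phi_{\e,R}>0$, normalized by $\int\U_{\e,R}=1$ and $\int\phi_{\e,R}\U_{\e,R}=1$. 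The eigenvalue $\lb_{\e,R}$ can be bounded from above and below independently of $\e,R$ by testing the $\U$-equation against $1$ and against $\phi$, using the mass-creation structure (the factor $2$, resp.\ $n$) of the fragmentation term and assumptions \eqref{as:kappa1}--\eqref{as:betatauinf} on the growth of $\beta/\tau$; this also shows $\lb>0$ since the total number of individuals strictly increases under fragmentation.

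The core of the argument — and the main obstacle, exactly as the abstract advertises — is obtaining $a\ priori$ estimates on $\U_{\e,R}$ that are uniform in $\e$ and $R$, in the weighted spaces appearing in the statement. Because $\tau$ is not bounded below near $x=0$, one cannot directly control $\U$ in $L^\infty$ or near the origin; instead I would work with the quantity $\tau\U$ and with moments $\int x^\al\tau(x)\U(x)\,dx$. Multiplying the stationary equation by $x^{\al+1}$ (for the low-order/negative $\al\ge-\gamma$ weights) and by higher powers, and integrating, the transport term integrates by parts cleanly because $\tau\U$ vanishes at $0$, while the fragmentation term is handled using the kernel assumptions (normalization $\int_0^y\kappa(x,y)\,dx=1$ together with a moment bound such as $\int_0^y x^\al\kappa(x,y)\,dx\le c_\al y^\al$) to absorb the right-hand side into the left. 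The growth condition on $\beta$ at infinity ($\beta/\tau\to\infty$, or the $\beta\tau$-type assumption named \eqref{as:betatauinf}) provides the coercivity that closes the moment estimates and gives decay of $\U$ at infinity; the behaviour of $\beta,\tau$ near $0$ (the exponent $\gamma$) governs which negative weights $\al\ge-\gamma$ are admissible and yields $x^\al\tau\U\in L^1$, hence by interpolation with the equation itself $x^\al\tau\U\in L^p$ for all $p\in[1,\infty]$. Differentiating the equation (equivalently, reading $\p_x(\tau\U)$ off the equation) then upgrades this to $x^\al\tau\U\in W^{1,1}$ for $\al\ge0$.

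With these bounds in hand, I would pass to the limit $\e\to0$, $R\to\infty$: the uniform weighted estimates give weak-$L^1$ (indeed, by the $W^{1,1}$ bound, strong $L^1_{loc}$) compactness of $\U_{\e,R}$ and, via Dunford--Pettis with the moment tails, no loss of mass, so the limit $\U$ satisfies $\int\U=1$ and solves \eqref{eq:D'eigenproblem} with the limiting $\lb>0$. For the dual function $\phi_{\e,R}$, the uniform bound $\lb\ge$ const gives, through the dual equation and Gronwall along the characteristics of $-\tau\p_x$, a polynomial growth bound $\phi_{\e,R}(x)\le C(1+x^k)$ with $k$ depending only on the exponents in the assumptions; this is uniform, so a locally-uniform limit $\phi$ exists with $\phi/(1+x^k)\in L^\infty$ and $\tau\phi'\in L^\infty_{loc}$ read directly from the equation. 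Finally, uniqueness: given two eigentriples I would use the conservation law satisfied by $\int (\U_1-c\,\U_2)^\pm\phi_j$ (the General Relative Entropy / Perron-type argument of \cite{BP,MMP1}), together with the irreducibility coming from $\kappa>0$ on $\{x<y\}$ and the positivity of $\tau$ away from $0$, to force $\U_1=\U_2$, $\phi_1=\phi_2$ and $\lb_1=\lb_2$; the only delicate point there is justifying the entropy identity despite the degeneracy of $\tau$ at $0$, which is handled by the weight $x^\al\tau\U\in L^1$ for $\al=-\gamma$ making all boundary terms at $0$ vanish.
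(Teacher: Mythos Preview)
Your outline matches the paper's strategy closely: truncate and regularize (the paper uses two parameters, a lower cutoff $\eta$ on $\tau$ and a boundary perturbation $\delta$ coupled to $R$), apply Krein--Rutman, derive uniform weighted-moment estimates on $\tau\U$ (high moments first via \eqref{as:kappa3} and \eqref{as:betatauinf}, then low ones, then the $x^{-\gamma}$ bound near $0$ via \eqref{as:kappatau}), pass to the limit by Dunford--Pettis plus $W^{1,1}$ compactness away from $0$, and finish with a GRE computation for uniqueness.

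Two places in your sketch would not go through as written. First, the polynomial bound on $\phi$: a direct Gr\"onwall along the characteristics of $-\tau\p_x$ yields at best $\phi(x)\lesssim\exp\bigl(\int^x(\lb+\beta)/\tau\bigr)$, which is far from polynomial since $\beta/\tau$ is not integrable at infinity under \eqref{as:betatauinf}. The paper instead constructs an explicit supersolution $\overline\phi=C(x^k+\theta)$ on an interval $[A_0,R]$, verifying that \eqref{as:betatauinf} together with \eqref{as:kappa3} makes $(1-2c)\,x\beta/\tau$ eventually dominate $k$, and then applies a maximum principle (Lemma~\ref{lm:supersolution}) to conclude $\phi\le\overline\phi$; the fragmentation integral is what allows a polynomial barrier, and this is not visible from a naive Gr\"onwall. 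Second, in the uniqueness step you invoke ``$\kappa>0$ on $\{x<y\}$'', which is not assumed here. The GRE identity only gives $\U_1/\U_2$ constant on $\mathrm{supp}\,\beta(y)\kappa(x,y)$; the paper then feeds this back into the equation (see \eqref{eq:uniqueness}) to get $\p_x(\U_1/\U_2)=0$ everywhere, and it is the connectedness assumption \eqref{as:betasupport} on $\mathrm{supp}\,\beta$, not pointwise positivity of $\kappa$, that closes the argument.
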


The end of this paper is devoted to define precisely the assumptions and prove this theorem. It is organized as follows : in Section \ref{se:coefficients} we describe the assumptions and give some examples of interesting parameters. In Section \ref{se:proof} we prove Theorem \ref{th:eigenelements} using $a\ priori$ bounds on weighted norms and then we give some consequences and perspectives in Section \ref{se:csq}. The proof of technical lemmas and theorem can be found in the Appendix.

\

\section{Coefficients}\label{se:coefficients}

\subsection{Assumptions}

For all $y\geq0,\ \kappa(.,y)$ is a nonnegative measure with a support included in $[0,y].$ We define $\kappa$ on $(\R_+)^2$ as follows : $\kappa(x,y)=0\ \text{for}\ x>y.$ We assume that
 for all continuous function $\psi,$ the application $f_\psi:y\mapsto\int\psi(x)\kappa(x,y)\,dx$ is Lebesgue measurable.\\
The natural assumptions on $\kappa$ (see \cite{Greer} for the motivations) are that polymers can split only in two pieces which is taken into account by
\beq\label{as:kappa1}\int\kappa(x,y) dx = 1.\eeq
So $\kappa(y,.)$ is a probability measure and $f_\psi\in L^\infty_{loc}(\R^+).$ The conservation of mass imposes
\beq\label{as:kappa2}\int x\kappa(x,y) dx = \frac y 2,\eeq
a property that is automatically satisfied for a symetric fragmentation ($i.e.\ \kappa(x,y)=\kappa(y-x,y)$) thanks to \eqref{as:kappa1}. For the more general model \eqref{eq:general}, assumption \eqref{as:kappa2} becomes $\int x\kappa(x,y) dx = \frac y n$ to preserv the mass conservation.\\
We also assume that the second moment of $\kappa$ is less than the first one
\beq\label{as:kappa3}\int\f{x^2}{y^2} \, \kappa(x,y) dx \leq c < 1/2\eeq
(it becomes $c<1/n$ for model \eqref{eq:general}). We refer to the Examples for an explanation of the physical meaning.

\

For the polymerization and fragmentation rates $\tau$ and $\beta,$ we introduce the set
$${\mathcal P}:=\bigl\{f\geq0\,:\,\exists\mu,\nu\geq0,\ \limsup_{x\to\infty}x^{-\mu}f(x)<\infty\ \text{and}\ \liminf_{x\to\infty}x^\nu f(x)>0\bigr\}$$
and the space
$$L^1_0:=\bigr\{f,\ \exists a>0,\ f\in L^1(0,a)\bigl\}.$$
We consider
\beq\label{as:betatauspace}\beta\in L^1_{loc}(\R^{+*})\cap{\mathcal P},\qquad
\exists\al_0\geq0\ s.t.\ \tau\in L^\infty_{loc}(\R^{+},x^{\al_0}dx)\cap{\mathcal P}\eeq
satisfying
\beq\label{as:taupositivity}\forall K\ \text{compact of}\ (0,\infty),\ \exists m_K>0\quad s.t.\quad\tau(x)\geq m_K\ \text{for}\ a.e.\ x\in K\eeq
(if $\tau$ is continuous, this assumption \eqref{as:taupositivity} is nothing but saying that for all $x>0,\ \tau(x)>0$) and
\beq\label{as:betasupport}\exists b\geq0,\quad Supp\beta=[b,\infty).\eeq
Assumption \eqref{as:betasupport} is necessary to prove uniqueness and existence for the adjoint problem.

\

To avoid shattering (zero-size polymers formation, see \cite{Banasiak,LW}), we assume
\beq\label{as:kappatau}\exists\, C>0,\gamma\geq0\quad s.t.\qquad\int_0^x\kappa(z,y)\,dz\leq \min\Bigl(1,C\Bigl(\f x y\Bigr)^\gamma\Bigr)\qquad\text{and}\qquad\f{x^\gamma}{\tau(x)}\in L^1_0\eeq
which links implicitely $\tau$ to $\kappa,$ and also
\beq\label{as:betatau0}\f{\beta}{\tau}\in L^1_0.\eeq
On the other hand, to avoid forming infinitely long polymers (gelation phenomenon, see \cite{EscoMischler1,EscoMischler2}), we assume
\beq\label{as:betatauinf}\lim_{x\rightarrow +\infty}\f{x\beta(x)}{\tau(x)}=+\infty.\eeq

\begin{remark}\label{rk:kappa}
In case when \eqref{as:kappatau} is satisfied for $\gamma>0,$ then \eqref{as:kappa3} is automatically fulfilled (see Lemma \ref{lm:kappa} in the Appendix).
\end{remark}

\

\subsection{Examples}

First we give some examples of coefficients which satisfy or not our previous assumptions.\\
For the fragmentation kernel, we first check the assumptions \eqref{as:kappa1} and \eqref{as:kappa2}. They are satisfied for autosimilar measures, namely $\kappa(x,y)=\f1y\kappa_0(\f xy),$ with $\kappa_0$ a probability measure on $[0,1],$ symmetric in $1/2.$ Now we exhibit some $\kappa_0.$

\

\noindent{\bf General mitosis :} a cell of size $x$ divides in a cell of size $rx$ and one of size $(1-r)x$ (see \cite{M2})
\beq\label{ex:kappar}\kappa_0^r=\f12(\delta_{r}+\delta_{1-r})\qquad\text{for}\qquad r\in[0,1/2].\eeq
Assumption \eqref{as:kappatau} is satisfied for any $\gamma>0$ in the cases when $r\in(0,1/2].$ So \eqref{as:kappa3} is also fulfilled thanks to Remark \ref{rk:kappa}. The particular value $r=1/2$ leads to equal mitosis ($\kappa(x,y)=\delta_{x=\f y2}$).\\
The case $r=0$ corresponds to the renewal equation ($\kappa(x,y)=\f12(\delta_{x=0}+\delta_{x=y})$). In this case, we cannot strictly speak of mitosis because the size of the daughters are $0$ and $x.$ It appears when $x$ is the age of a cell and not the size. This particular case is precisely the one that we want to avoid with assumption \eqref{as:kappa3} ; it can also be studied seperately with different tools (see \cite{PTum} for instance). For such a fragmentation kernel, assumption \eqref{as:kappatau} is satified only for $\gamma=0,$ and the moments $\int z^k\kappa_0(z)dz$ are equal to $1/2$ for all $k>0,$ so \eqref{as:kappa3} does not hold true. However, if we consider a convex combination of $\kappa_0^0$ with another kernel such as $\kappa_0^r$ with $r\in(0,1/2],$ then \eqref{as:kappatau} remains false for any $\gamma>0$ but \eqref{as:kappa3} is fulfilled. Indeed we have for $\rho\in(0,1)$
$$\int z^2(\rho\kappa_0^0(z)+(1-\rho)\kappa_0^r(z))\,dz=\f\rho2+\f{1-\rho}2(r^2+(1-r)^2)=\f12(1-2r(1-r)(1-\rho))<\f12.$$

\noindent{\bf Homogeneous fragmentation :}
\beq\label{ex:kappaal}\kappa_0^\al(z)=\f{\al+1}{2}(z^\al+(1-z)^\al)\qquad\text{for}\qquad\al>-1.\eeq
It gives another class of fragmentation kernels, namely in $L^1$ (unlike the mitosis case). The parameter $\gamma=1+\al>0$ suits for \eqref{as:kappatau} and so \eqref{as:kappa3} is fulfilled. It shows that our assumptions allow fragmentation at the ends of the polymers (called depolymerization, see \cite{Lenuzza}, when $\al$ is close to $-1$) once it is not the extreme case of renewal equation.\\
Uniform repartition ($\kappa(x,y)=\f1y\1_{0\leq x\leq y}$) corresponds to $\al=0$ and is also included.

\

This last case of uniform repartition is useful because it provides us with explicit formulas for the eigenelements. For instance, we can consider the two following examples.

\

\noindent{\bf First example :} $\,\tau(x)=\tau_0,\ \beta(x)=\beta_0x.$\\
In this case, widely used by \cite{Greer}, the eigenelements exist and we have
$$\lb=\sqrt{\beta_0\tau_0},$$
$$\U(x)=2\sqrt{\f{\beta_0}{\tau_0}}\Bigl(X+\f{X^2}2\Bigr)e^{-X-\f{X^2}2},\quad\text{with}\ X=\sqrt{\f{\beta_0}{\tau_0}}x,$$
$$\phi(x)=\f12(1+X).$$
\noindent{\bf Second example :} $\ \tau(x)=\tau_0x.$\\
For such $\beta$ for which there exists eigenelements, we have
$$\lb=\tau_0\qquad \text{and}\qquad \phi(x)=\f x{\int y\U(y)}.$$
For instance when $\beta(x)=\beta_0x^n$ with $n\in\N^*,$ then the eigenelements exist and we can compute $\U$ and $\phi$ and we have the formulas in Table \ref{tab:examples}. In this table we can notice that $\U(0)>0$ but the boundary condition $\tau\U(0)=0$ is fulfilled.

\begin{table}[h]
\begin{center}
\begin{tabular}{|c|c|c|c|}
\hline
$n=1$&$\lb=\tau_0$&$\U(x)=\f{\beta_0}{\tau_0}e^{-\f{\beta_0}{\tau_0}x}$&$\phi(x)=\f{\beta_0}{\tau_0}x$\\
\hline
$n=2$&$\lb=\tau_0$&$\U(x)=\sqrt{\f{2\beta_0}{\pi\tau_0}}e^{-\f12\f{\beta_0}{\tau_0}x^2}$&$\phi(x)=\sqrt{\f{\pi\beta_0}{2\tau_0}}x$\\
\hline
$n$ & $\lb=\tau_0$&$\U(x)=\Bigl(\f{\beta_0}{n\tau_0}\Bigr)^{\f1n}\f n{\Gamma(\f1n)}e^{-\f1n\f{\beta_0}{\tau_0}x^n}$&$\phi(x)=\Bigl(\f{\beta_0}{n\tau_0}\Bigr)^{\f1n}\f {\Gamma(\f1n)}{\Gamma(\f2n)}x$\\
\hline
\end{tabular}
\caption{\label{tab:examples}The example $\tau(x)=\tau_0x,\ \beta(x)=\beta_0x^n$ and uniform repartition $\kappa(x,y)=\f1y\1_{0\leq x\leq y}.$ The table gives the eigenelements solution to \eqref{eq:eigenproblem}.}
\end{center}
\end{table}

Now we turn to non-existence cases. Let us consider constant fragmentation $\beta(x)=\beta_0$ with an affine polymerization $\tau(x)=\tau_0+\tau_1x,$ and any fragmentation kernel $\kappa$ which satisfies to assumptions \eqref{as:kappa1}-\eqref{as:kappa2}. We notice that \eqref{as:betatauinf} is not satisfied and look at two instructive cases.

\

\noindent{\bf First case :} $\ \tau_0=0.$\\
In this case assumption \eqref{as:betatau0} does not hold true. Assume that there exists $\U\in L^1(\R^+)$ solution of \eqref{eq:eigenproblem} with the estimates of Theorem \ref{th:eigenelements}. Integrating the equation on $\U$ we obtain that $\lb=\beta_0,$ but multiplying the equation by $x$ before integration we have that $\lb=\tau_1.$ We conclude that eigenelements cannot exist if $\tau_1\neq\beta_0.$\\
Moreover, if we take $\kappa(x,y)=\f1y\1_{0\leq x\leq y},$ then a formal computation shows that any solution to the first equation of \eqref{eq:eigenproblem} belongs to the plan $Vect\{x^{-1},x^{-\f{2\beta_0}{\tau_1}}\}.$ So, even if $\beta_0=\tau_1$, there does not exist an eigenvector in $L^1.$

\

\noindent{\bf Second case :} $\ \tau_0>0.$\\
In this case \eqref{as:betatau0} holds true but the same integrations than before lead to
$$\int x\U(x)\,dx=\f{\tau_0}{\beta_0-\tau_1}.$$
So there cannot exist any eigenvector $\U\in L^1(x\,dx)$ for $\tau_1\geq\beta_0.$

\

\section{Proof of the main theorem}\label{se:proof}

The proof of Theorem \ref{th:eigenelements} is divided as follows. We begin with a result concerning the positivity of the $a\ priori$ existing eigenvectors (Lemma \ref{lm:positivity}). We then define, in Section \ref{subsec:truncated}, a regularized and truncated problem for which we know that eigenelements exist (see the Appendix \ref{se:KR} for a proof using the Krein-Rutman theorem), and we choose it such that the related eigenvalue is positive (Lemma \ref{lm:lambdapositivity}). In Section~\ref{subsec:estim}, we give a series of estimates that allow us to pass to the limit in the truncated problem and so prove the existence for the original eigenproblem \eqref{eq:eigenproblem}. The positivity of the eigenvalue $\lb$ and the uniqueness of the eigenelements are proved in the two last subsections. 

\subsection{A preliminary lemma}

Before proving Theorem \ref{th:eigenelements}, we give a preliminary lemma, useful to prove uniqueness of the eigenfunctions.

\begin{lemma}[Positivity]\label{lm:positivity}
Consider $\U$ and $\phi$ solutions to the eigenproblem \eqref{eq:eigenproblem}.\\
We define $\dis m:=\inf_{x,y}\bigl\{x\,:\,(x,y)\in Supp\,\beta(y)\kappa(x,y)\bigr\}.$ Then we have, under assumptions \eqref{as:kappa1}, \eqref{as:kappa2}, \eqref{as:taupositivity} and \eqref{as:betasupport}
$$Supp\,\U=[m,\infty)\qquad\text{and}\qquad\tau\U(x)>0\quad\forall x>m,$$
$$\phi(x)>0\quad\forall x>0.$$
If additionaly $\f1\tau\in L^1_0,$ then $\phi(0)>0.$
\end{lemma}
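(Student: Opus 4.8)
The statement is about the support and strict positivity of the primal eigenvector $\U$ and the dual eigenfunction $\phi$. I would treat the two equations separately, starting with $\U$.

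\emph{Support and positivity of $\U$.} The first equation of \eqref{eq:eigenproblem} can be rewritten, on any interval avoiding $x=0$, as a first-order linear ODE for $\tau\U$:
$$\f{\p}{\p x}(\tau(x)\U(x)) + \f{\beta(x)+\lb}{\tau(x)}\,(\tau(x)\U(x)) = 2\int_x^\infty\beta(y)\kappa(x,y)\U(y)\,dy =: S(x).$$
By assumptions \eqref{as:betatau0} and \eqref{as:taupositivity}, the coefficient $(\beta+\lb)/\tau$ is locally integrable on $(0,\infty)$, so Duhamel's formula gives $\tau\U(x)$ as an explicit integral of the nonnegative source $S$ against a positive kernel; since $\U\geq0$ and $\U\in L^1$, the source term $S$ is well-defined and nonnegative. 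First I would show that $\tau\U$ cannot vanish on a set of positive measure inside $[m,\infty)$: if $\tau\U\equiv0$ on some maximal interval, then on that interval $S\equiv0$, which (using \eqref{as:taupositivity} again and the definition of $m$) forces $\U\equiv0$ above as well, contradicting $\int\U=1$ once one reaches the top of the support — this is the standard "propagation of support downward via the fragmentation term, upward via the transport term" argument. Conversely, for $x<m$, the definition of $m$ makes $S(x)=0$, and the zero boundary behaviour $\tau\U(0)=0$ forces $\tau\U\equiv0$ on $[0,m)$ by the ODE (uniqueness of the trivial solution of the homogeneous linear equation). Combining, $Supp\,\U=[m,\infty)$ and, once $S$ is shown to be strictly positive for $x>m$ (it is an integral of $\beta\kappa\U$ over $(x,\infty)$, nonzero because $\U$ does not vanish near the top of its support and $\beta$ has support $[b,\infty)$ by \eqref{as:betasupport}), Duhamel gives $\tau\U(x)>0$ for all $x>m$.

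\emph{Positivity of $\phi$.} The dual equation is
$$-\tau(x)\phi'(x) + (\beta(x)+\lb)\phi(x) = 2\beta(x)\int_0^x\kappa(y,x)\phi(y)\,dy,$$
again a first-order linear ODE, this time to be integrated \emph{from the right}. Rewriting it as $\phi'(x) = \f{\beta(x)+\lb}{\tau(x)}\phi(x) - \f{2\beta(x)}{\tau(x)}\int_0^x\kappa(y,x)\phi(y)\,dy$ and using that $\phi\geq0$, Duhamel (integrating backward from some point) expresses $\phi$ via a Gronwall-type inequality showing that if $\phi(x_0)>0$ for some $x_0$ then $\phi>0$ on all of $(0,x_0]$; and the normalization $\int\phi\U=1$ forbids $\phi\equiv0$, so such an $x_0$ exists and in fact (by the support result for $\U$) can be taken arbitrarily large, hence $\phi(x)>0$ for all $x>0$. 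For the last claim, under the extra hypothesis $1/\tau\in L^1_0$ the coefficient $(\beta+\lb)/\tau$ is integrable near $0$ (using \eqref{as:betatau0}), so the backward Duhamel formula extends continuously to $x=0$ and yields $\phi(0)>0$ as a limit of strictly positive values bounded below.

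\emph{Main obstacle.} The delicate point is the low regularity: $\tau,\beta$ are only $L^1_{loc}(\R^{+*})$ (and $\tau$ only $L^\infty_{loc}(x^{\al_0}dx)$), so "the ODE" must be interpreted in the Carathéodory / distributional sense used in \eqref{eq:D'eigenproblem}, and one must justify that $\tau\U$ is absolutely continuous on compacts of $(0,\infty)$ and that the Duhamel representation is legitimate — in particular checking that the source $S$ is locally integrable and that the integrating factor $\exp(\int(\beta+\lb)/\tau)$ makes sense. The behaviour at $x=0$ (the boundary condition $\tau\U(0)=0$ and the extension of $\phi$ to $0$) is where assumptions \eqref{as:kappatau} and \eqref{as:betatau0}, resp. $1/\tau\in L^1_0$, are genuinely needed, and I expect the careful bookkeeping there — rather than the sign-propagation idea, which is classical — to be the technical heart of the argument.
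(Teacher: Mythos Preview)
Your strategy---rewrite each equation as a linear ODE, use the integrating factor, and propagate positivity---is exactly the paper's, and the parts about $F(x)=\tau\U\,e^{\int(\lb+\beta)/\tau}$ being nondecreasing and $G(x)=\phi\,e^{-\int(\lb+\beta)/\tau}$ being nonincreasing are right. But the step you label ``propagation of support downward via the fragmentation term'' has a real gap.

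You write that if $\tau\U\equiv0$ on a maximal interval then $S\equiv0$ there, and that this ``forces $\U\equiv0$ above as well''. It does not. By monotonicity of $F$ the maximal zero interval is $[0,a]$ with $a:=\inf\{x:\tau\U(x)>0\}$, and $S\equiv0$ on $[0,a]$ only yields
\[
\int_a^\infty \beta(y)\,\U(y)\!\int_0^a\kappa(x,y)\,dx\,dy=0,
\]
i.e.\ $\int_0^a\kappa(x,y)\,dx=0$ for a.e.\ $y$ with $\beta(y)\U(y)>0$. This says nothing about $\U$ vanishing above $a$; it is a statement about $\kappa$. To turn it into $a\le m$ you must first rule out that the point $(x_0,y_0)\in Supp\,\beta\kappa$ witnessing $m<a$ has $y_0\le a$ (where $\U(y_0)=0$ would give no contradiction). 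The paper does this by a separate step you are missing: from $\int_0^a\kappa(\cdot,y)=0$ and \eqref{as:kappa1}--\eqref{as:kappa2} one gets $1=\int_a^y\kappa(x,y)\,dx\le\frac1a\int x\kappa(x,y)\,dx=\frac{y}{2a}$, hence $y\ge 2a$ for all such $y$; letting $y\downarrow b$ gives $a\le b/2$. Once $a<b$, any $(x_0,y_0)\in Supp\,\beta\kappa$ has $y_0\ge b>a$, so $\U(y_0)>0$ and the contradiction with $S(x_0)=0$ goes through. Without this moment argument your ``downward propagation'' is not valid.

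The $\phi$ part has the analogous hole: you assert that the point $x_0$ with $\phi(x_0)>0$ ``can be taken arbitrarily large (by the support result for $\U$)'', but $\int\phi\,\U=1$ together with $Supp\,\U=[m,\infty)$ only gives $x_1:=\sup\{x:\phi(x)>0\}>m$, not $x_1=\infty$. The missing ingredient is again the moment conditions (or the definition of $m$): if $\phi\equiv0$ on $[x_1,\infty)$ then the dual equation forces $\int_0^{x_1}\kappa(y,x)\,dy=0$ for a.e.\ $x>\max(x_1,b)$, and either \eqref{as:kappa2} (if $x_1\ge b$) or the definition of $m$ (if $x_1<b$) then yields a contradiction with $x_1>m$.
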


\begin{remark}
In case $Supp\,\kappa=\{(x,y)/x\leq y\},$ then $m=0$ and Lemma \ref{lm:positivity} and Theorem \ref{th:eigenelements} can be proved without the connexity condition \eqref{as:betasupport} on the support of $\beta.$
\end{remark}

\begin{proof}
Let $x_0>0,$ we define $F:x\mapsto \tau(x)\U(x)e^{\int_{x_0}^x\f{\lb+\beta(s)}{\tau(s)}ds}.$ We have that
\beq\label{Upositivity}F'(x)=2e^{\int_{x_0}^x\f{\lb+\beta(s)}{\tau(s)}ds}\int\beta(y)\kappa(x,y)\U(y)\,dy\geq0.\eeq
So, as soon as $\tau\U(x)$ once becomes positive, it remains positive for larger $x.$

\

We define $a:=\inf\{x\,:\,\tau(x)\U(x)>0\}.$ We first prove that $a\leq \f b2.$ For this we integrate the equation on $[0,a]$ to obtain
$$\int_0^a\int_a^\infty\beta(y)\kappa(x,y)\U(y)\,dydx=0,$$
$$\int_a^\infty\beta(y)\U(y)\int_0^a\kappa(x,y)\,dxdy=0.$$
Thus for almost every $y\geq\max(a,b),\ \int_0^a\kappa(x,y)\,dx=0.$ As a consequence we have
$$1=\int\kappa(x,y)\,dx=\int_a^y\kappa(x,y)\,dx\leq\f1a\int x\kappa(x,y)\,dx=\f y{2a}$$
thanks to \eqref{as:kappa1} and \eqref{as:kappa2}, and this is possible only if $b\geq2a.$

\

Assume by contradiction that $m<a,$ integrating \eqref{eq:eigenproblem} multiplied by $\varphi,$ we have for all $\varphi\in{\mathcal C}_c^\infty$ such that $Supp\,\varphi\subset[0,a]$
\beq\label{eq:positivity}\int\int\varphi(x)\beta(y)\kappa(x,y)\U(y)\,dydx=0.\eeq
By definition of $m$ and using the fact that $m<a,$ there exists $(p,q)\in(m,a)\times(b,\infty)$ such that $(p,q)\in Supp\,\beta(y)\kappa(x,y).$ But we can choose $\varphi$ positive such that $\varphi(p)\U(q)>0$ and this is a contradiction with \eqref{eq:positivity}.
So we have $m\geq a.$\\
To conclude we notice that on $[0,m],\ \U$ satisfies
$$\p_x(\tau(x)\U(x))+\lb\U(x)=0.$$
So, thanks to the condition $\tau(0)\U(0)=0$ and the assumption \eqref{as:taupositivity}, we have $\U\equiv0$ on $[0,m],$ so $m=a$ and the first statement is proved.

\

For $\phi,$ we define $G(x):=\phi(x)e^{-\int_{x_0}^x\f{\lb+\beta(s)}{\tau(s)}ds}.$ We have that
\beq\label{eq:phipositivity}G'(x)=-2e^{-\int_{x_0}^x\f{\lb+\beta(s)}{\tau(s)}ds}\beta(x)\int_0^x\kappa(y,x)\phi(y)\,dy\leq0,\eeq
so, as soon as $\phi$ vanishes, it remains null. Therefore $\phi$ is positive on an interval $(0,x_1)$ with $x_1\in\R_+^*\cup\{+\infty\}.$ Assuming that $x_1<+\infty$ and using that $x_1>a=m$ because $\int\phi(x)\U(x)dx=1,$ we can find $X\geq x_1$ such that
$$\int_{x_1}^XG'(x)\,dx=-2\int_{x_1}^X\int_0^{x_1}e^{\int_{x_0}^x\f{\lb+\beta(s)}{\tau(s)}ds}\phi(y)\beta(x)\kappa(y,x)\,dy\,dx<0.$$
This contradicts that $\phi(x)=0$ for $x\geq x_1,$ and we have proved that $\phi(x)>0$ for $x>0.$\\
If $\f1\tau\in L^1_0,$ we can take $x_0=0$ in the definition of $G$ and so $\phi(0)>0$ or $\phi\equiv0.$ The fact that $\phi$ is positive ends the proof of the lemma.

\qed
\end{proof}

\

\subsection{Truncated problem}
\label{subsec:truncated}
The proof of the theorem is based on uniform estimates on the solution to a truncated equation. Let $\eta,\ \delta,\ R$ positive numbers and define
$$\tau_\eta(x)=\left\{\begin{array}{ll}\eta&0\leq x\leq\eta\\
                                       \tau(x)&x\geq\eta.
                      \end{array}\right.$$
Then $\tau_\eta$ is lower bounded on $[0,R]$ thanks to \eqref{as:taupositivity} and we denote by $\mu=\mu(\eta,R):=\inf_{[0,R]}\tau_\eta.$
The existence of eigenelements $(\lb_\eta^\delta,\U_\eta^\delta,\phi_\eta^\delta)$ for the following truncated problem when $\delta R<\mu$ is standard (see Theorem \ref{th:KreinRutman} in the Appendix).

\beq\label{eq:truncated}
\left \{ \begin{array}{l}
\displaystyle \f{\p}{\p x} (\tau_\eta(x) \U_\eta^\delta(x)) + ( \beta(x) + \lb_{\eta}^\delta) \U_\eta^\delta(x) = 2 \int_x^R\beta(y)\kappa(x,y) \U_\eta^\delta(y)\,dy,\qquad 0<x<R,
\\
\\
\tau_\eta\U_\eta^\delta(x=0)=\delta,\qquad \U_\eta^\delta(x)>0 , \qquad \int \U_\eta^\delta(x)dx =1,
\\
\\
\displaystyle -\tau_\eta(x) \f{\p}{\p x} \phi_\eta^\delta(x) + ( \beta(x) + \lb_\eta^\delta) \phi_\eta^\delta(x) - 2 \beta(x) \int_0^x\kappa(y,x) \phi_\eta^\delta(y)\,dy = \delta\phi_\eta^\delta(0),\qquad 0<x<R,
\\
\\
\phi_\eta^\delta(R)=0, \qquad \phi_\eta^\delta(x)>0 , \qquad \int \phi_\eta^\delta(x)\U_\eta^\delta(x)dx =1.
\end{array} \right.
\eeq

The proof of the theorem \ref{th:eigenelements} requires $\lb_\eta^\delta>0.$ To enforce it, we take $\delta R=\f\mu2$ and we consider $R$ large enough to satisfy the following lemma.

\begin{lemma}\label{lm:lambdapositivity}
Under assumptions \eqref{as:kappa1}, \eqref{as:betatauspace} and \eqref{as:betatauinf}, there exists a $R_0>0$ such that for all $R>R_0,$ if we choose $\delta=\f\mu{2R},$ then we have $\lb_\eta^\delta>0.$
\end{lemma}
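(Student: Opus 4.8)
The plan is to work with the truncated eigenproblem \eqref{eq:truncated} and extract information about $\lb_\eta^\delta$ by testing the equation on $\U_\eta^\delta$ against well-chosen weights. Integrating the first equation of \eqref{eq:truncated} over $(0,R)$, using $\int\kappa(x,y)\,dx=1$ from \eqref{as:kappa1} together with the boundary terms $\tau_\eta\U_\eta^\delta(0)=\delta$ and $\tau_\eta\U_\eta^\delta(R)\geq0$, one gets an identity of the shape
\beq
\lb_\eta^\delta = \delta - \tau_\eta\U_\eta^\delta(R) + \int_0^R\beta(y)\U_\eta^\delta(y)\Bigl(\int_0^y\kappa(x,y)\,dx-1\Bigr)dy + (\text{terms from truncating the gain integral at }R),
\eeq
so the bare mass identity alone is not conclusive because of the sign of the boundary loss at $R$ and the truncation of the fragmentation gain. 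Instead I would test against a weight $\psi(x)$ that is increasing and bounded, e.g. $\psi(x)=1-e^{-x}$ or more simply just exploit the weight $x$; multiplying the first equation by $x$ and integrating, the transport term yields $-\int_0^R\tau_\eta(x)\U_\eta^\delta(x)\,dx$ (the boundary contributions vanishing, since $x=0$ kills $\delta$ and one keeps $R\,\tau_\eta\U_\eta^\delta(R)\geq0$ on the good side), while the fragmentation terms combine via \eqref{as:kappa2} ($\int x\kappa(x,y)dx=y/2$, hence $2\int x\kappa - x$ contributes $+x$ back, the fragmentation part cancels) to give
\beq
\lb_\eta^\delta\int_0^R x\,\U_\eta^\delta(x)\,dx = \int_0^R\tau_\eta(x)\U_\eta^\delta(x)\,dx - R\,\tau_\eta\U_\eta^\delta(R) - 2\int_0^R\Bigl(\int_x^R\!\!(\text{missing mass past }R)\Bigr).
\eeq
The cleaner route, which I expect the authors take and which I would follow, is to combine the two identities: from the $x$-moment identity, $\lb_\eta^\delta$ is essentially $\bigl(\int\tau_\eta\U_\eta^\delta\bigr)/\bigl(\int x\U_\eta^\delta\bigr)$ up to nonpositive boundary/truncation corrections, so to get $\lb_\eta^\delta>0$ it suffices to show $\int_0^R\tau_\eta\U_\eta^\delta$ dominates those corrections once $R$ is large.

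For this domination step I would use assumption \eqref{as:betatauinf}, $x\beta(x)/\tau(x)\to+\infty$, which says that far out the fragmentation dramatically outweighs transport; concretely this forces $\U_\eta^\delta$ to be concentrated on a bounded region independent of $R$ (a tightness estimate), so that the boundary loss $\tau_\eta\U_\eta^\delta(R)$ and the truncation defect $\int_0^R\beta(y)\U_\eta^\delta(y)\bigl(1-\int_0^y\kappa\bigr)dy$ — note $1-\int_0^y\kappa(x,y)dx=0$ for the untruncated kernel since $\kappa(\cdot,y)$ is supported on $[0,y]$, so the only genuine defect is the cut of the gain integral at $R$ and the outgoing flux at $R$ — become negligible as $R\to\infty$, while $\int\tau_\eta\U_\eta^\delta$ stays bounded below by a positive constant (using \eqref{as:betatauspace}, \eqref{as:taupositivity}: $\tau\in\mathcal P$ and $\U_\eta^\delta$ is a probability density not escaping to $0$ or $\infty$). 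Thus for $R\geq R_0$ the right-hand side is positive, giving $\lb_\eta^\delta>0$. An alternative, perhaps slicker, is a contradiction/comparison argument: if $\lb_\eta^\delta\leq0$ for a sequence $R_n\to\infty$, then the equation $\p_x(\tau_\eta\U)\leq 2\int_x^R\beta\kappa\U - \beta\U$ combined with \eqref{as:betatauinf} shows $\tau_\eta\U_\eta^\delta$ cannot be integrable uniformly — it would have to grow — contradicting $\int\U_\eta^\delta=1$ and $\tau_\eta\in\mathcal P$; but making that rigorous needs the same tightness input.

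The main obstacle I anticipate is precisely establishing the $R$-uniform tightness of $\U_\eta^\delta$ so that the boundary term at $R$ and the truncation defect in the gain term vanish as $R\to\infty$: one must show that the mass of $\U_\eta^\delta$ near $x=R$ is small uniformly in $\eta$ and $\delta=\mu/(2R)$. I would get this from \eqref{as:betatauinf} by a weighted-norm (Lyapunov) estimate — multiply the equation by a weight $w(x)$ with $w'\leq 0$ chosen so that $-\tau_\eta w' + \beta w$ dominates $2\int w(y')\kappa(y',x)$-type gain contributions for large $x$ (using $\beta/\tau\to\infty$ in the appropriate sense and \eqref{as:kappa3} to control the gain), which bounds $\int w\,\U_\eta^\delta$ uniformly and forces concentration; this is exactly the weighted-norm technique the abstract advertises. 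Once tightness is in hand, plugging into the $x$-moment identity and letting $R\to\infty$ while $R$ is still finite (choosing $R_0$ so the corrections are, say, at most half of the lower bound on $\int\tau_\eta\U_\eta^\delta$) closes the argument. I would also double-check the boundary bookkeeping at $x=0$: since $\tau_\eta\U_\eta^\delta(0)=\delta=\mu/(2R)\to0$, the $\delta$-contributions are harmless, and the choice $\delta R=\mu/2$ keeps $\delta R$ bounded, which is what one needs in the moment identities.
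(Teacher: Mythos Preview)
Your main line of attack --- the $x$-moment identity plus a tightness estimate on $\U_\eta^\delta$ to kill the boundary term $R\,\tau_\eta\U_\eta^\delta(R)$ --- has a circularity problem. The Lyapunov/weighted-moment bounds you invoke (multiply by $x^\al$ and use \eqref{as:kappa3} to absorb the gain) are exactly the paper's ``first estimate'' in Section~\ref{subsec:estim}, and there the term $\lb_\eta^\delta\int x^\al\U_\eta^\delta$ is dropped from the left-hand side \emph{because} $\lb_\eta^\delta\geq0$. If you assume nothing on the sign of $\lb_\eta^\delta$, that term moves to the right with an uncontrolled factor $|\lb_\eta^\delta|$, and you have no independent lower bound on $\lb_\eta^\delta$ from Krein--Rutman alone. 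So the tightness you need to conclude $\lb_\eta^\delta>0$ already presupposes $\lb_\eta^\delta\geq0$. (Also, a weight with $w'\leq0$ will not give you control of the tail near $x=R$; you want an increasing weight, which then runs into the sign issue just described.)

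The paper's proof is precisely the ``alternative, perhaps slicker'' contradiction argument you mention at the end --- and it does \emph{not} need any tightness input. Assume $\lb_\eta^\delta\leq0$, integrate the equation on $[0,x]$; after using \eqref{as:kappa1} and positivity of the truncated gain past $x$, one gets
\[
\tau_\eta(x)\U_\eta^\delta(x)\ \geq\ \delta+\int_0^x\f{\beta(y)}{\tau_\eta(y)}\,\tau_\eta(y)\U_\eta^\delta(y)\,dy,
\]
and Gr\"onwall gives $\tau_\eta\U_\eta^\delta(x)\geq\delta\,e^{\int_0^x\beta/\tau_\eta}$. Assumption \eqref{as:betatauinf} then says $\beta/\tau\geq n/x$ for $x\geq A$ (any $n$), so $\tau_\eta\U_\eta^\delta$ grows at least like $\delta x^n$ beyond $A$; with $\tau\in\mathcal P$ from \eqref{as:betatauspace} this forces $\U_\eta^\delta\geq \f{2}{R}$ on $[A,R]$ for suitable $n,A$, hence $\int_0^R\U_\eta^\delta\geq \f{2}{R}(R-A)>1$ once $R>2A$, contradicting $\int\U_\eta^\delta=1$. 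The whole thing is four lines and uses only \eqref{as:kappa1}, \eqref{as:betatauspace}, \eqref{as:betatauinf} and the choice $\delta=\mu/(2R)$; no moment bounds, no tightness.
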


\begin{proof}
Assume by contradiction that $R>0$ and $\lb_\eta^\delta\leq0$ with $\delta=\f\mu{2R}.$ Then, integrating between $0$ and $x>0,$ we obtain
\begin{eqnarray*}
0&\geq&\lb\int_0^x\U(y)\,dy\\
&=&\delta-\tau(x)\U(x)-\int_0^x\beta(y)\U(y)\,dy+2\int_0^x\int_z^R\beta(y)\kappa(z,y)\U(y)\,dy\,dz\\
&=&\delta-\tau(x)\U(x)+\int_0^x\beta(y)\U(y)\,dy+2\int_x^R\Bigl(\int_0^x\kappa(z,y)\,dz\Bigr)\beta(y)\U(y)\,dy\\
&\geq&\delta-\tau(x)\U(x)+\int_0^x\beta(y)\U(y)\,dy.
\end{eqnarray*}
Consequently $$\tau(x)\U(x)\geq\delta+\int_0^x\f{\beta(y)}{\tau(y)}\tau(y)\U(y)\,dy$$
and, thanks to Gr\"onwall's lemma,
$$\tau(x)\U(x)\geq\delta e^{\int_0^x\f{\beta(y)}{\tau(y)}dy}.$$
But assumption \eqref{as:betatauinf} ensures that for all $n\geq0,$ there is a $A>0$ such that $$\f{\beta(x)}{\tau(x)}\geq\f nx,\qquad\forall x\geq A$$
and thus we have
$$\tau(x)\U(x)\geq\delta \Bigl(\f xA\Bigr)^n,\quad\forall x\geq A.$$
Due to Assumption~\eqref{as:betatauspace}, we can choose $n$ such that $x^{-n}\tau(x)\to0$ when $x\to+\infty.$
Then there exists $B>A$ such that $x^{-n}\tau(x)\leq \f\mu{4A^n}$ for $x\geq B,$ and we have, for $R>B,$
$$1=\int_0^R\U(x)\,dx\geq\int_B^R\U(x)\,dx\geq\delta\int_B^R\f{x^n}{A^n\tau(x)}\,dx\geq\f2R(R-B)$$
what is a contradiction as soon as $R>2B$; so Lemma \ref{lm:lambdapositivity} holds for $R_0=2B.$

\qed
\end{proof}

\

\subsection{Limit as $\delta\to0$ for $\U_\eta^\delta$ and $\lb_\eta^\delta$}
\label{subsec:estim}
Fix $\eta$ and let $\delta\rightarrow0$ (then $R\to\infty$ since $\delta R=\f\mu2$).

\paragraph{\it First estimate: $\lb_\eta^\delta$ upper bound.}
Integrating equation \eqref{eq:truncated} between $0$ and $R,$ we find
$$\lb_\eta^\delta\leq\delta+\int\beta(x)\U_\eta^\delta(x)\,dx,$$
then the idea is to prove a uniform estimate on $\int\beta\U_\eta^\delta.$ For this we begin with bounding the higher moments $\int x^\al\beta\U_\eta^\delta$ for $\al\geq\max{(2,\al_0+1)}:=m.$\\
Let $\al\geq m,$ according to \eqref{as:kappa3} we have
$$\int\f{x^\al}{y^\al}\kappa(x,y)\,dx\leq\int\f{x^2}{y^2}\kappa(x,y)\,dx\leq c<\f12.$$
Multiplying the equation on $\U_\eta^\delta$ by $x^\al$ and then integrating on $[0,R],$ we obtain for all $A\geq\eta$
\begin{eqnarray*}
\int x^\al\bigl((1-2c)\beta(x)\bigr)\U_\eta^\delta(x)\,dx&\leq&\al\int x^{\al-1}\tau_\eta(x)\U_\eta^\delta(x)\,dx\\
&=&\al\int_{x\leq A}x^{\al-1}\tau_\eta(x)\U_\eta^\delta(x)\,dx+\al\int_{x\geq A}x^{\al-1}\tau(x)\U_\eta^\delta(x)\,dx\\
&\leq&\al A^{\al-1-\al_0}\sup_{x\in(0,A)}{\{x^{\al_0}\tau(x)\}}+\omega_{A,\al}\int x^\al\beta(x)\U_\eta^\delta(x)\,dx,
\end{eqnarray*}
where $\omega_{A,\al}$ is a positive number chosen to have $\al\tau(x)\leq\omega_{A,\al} x\beta(x),\ \forall x\geq A.$ Thanks to \eqref{as:kappa3} and \eqref{as:betatauinf}, we can choose $A_\al$ large enough to have $\omega_{A_\al,\al}<1-2c.$
Thus we find
\beq\label{eq:L1bound1}\forall\al\geq m,\,\exists A_\al:\ \forall\eta,\delta>0,\quad\int x^\al\beta(x)\U_\eta^\delta(x)\,dx\leq\f{\al {A_\al}^{\al-1-\al_0}\sup_{(0,A)}{\{x^{\al_0}\tau(x)\}}}{1-2c-\omega_{A_\al,\al}}:=B_\al.\eeq
The next step is to prove the same estimates for $0\leq\al<m$ and for this we first give a bound on $\tau_\eta\U_\eta^\delta.$ We fix $\rho\in(0,1/2)$ and define $x_\eta>0$ as the unique point such that $\int_0^{x_\eta}\f{\beta(y)}{\tau_\eta(y)}dy=\rho.$ It exists because $\beta$ is nonnegative and locally integrable, and $\tau_\eta$ is positive. Thanks to assumption \eqref{as:betatau0}, we know that $x_\eta\underset{\eta\to0}{\longrightarrow}x_0$ where $x_0>0$ satisfies $\int_0^{x_0}\f{\beta(y)}{\tau(y)}dy=\rho,$ so $x_\eta$ is bounded by $0<\underline x\leq x_\eta\leq\overline x.$ Then, integrating \eqref{eq:truncated} between $0$ and $x\leq x_\eta,$ we find
\begin{eqnarray*}
\tau_\eta(x)\U_\eta^\delta(x)&\leq&\delta+2\int_0^{x}\int\beta(y)\U_\eta^\delta(y)\kappa(z,y)\,dy\,dz\\
&\leq&\delta+2\int\beta(y)\U_\eta^\delta(y)\,dy\\
&=&\delta+2\int_0^{x_\eta}\beta(y)\U_\eta^\delta(y)\,dy+2\int_{x_\eta}^\infty\beta(y)\U_\eta^\delta(y)\,dy\\
&\leq&\delta+2\sup_{(0,x_\eta)}\{\tau_\eta\U_\eta^\delta\}\int_0^{x_\eta}\f{\beta(y)}{\tau_\eta(y)}\,dy+\f2{x_\eta^m}\int_0^\infty y^m\beta(y)\U_\eta^\delta(y)\,dy\\
&\leq&\delta+2\rho\sup_{(0,x_\eta)}\{\tau_\eta\U_\eta^\delta\}+\f2{x_\eta^m}B_m.
\end{eqnarray*}
Consequently, if we consider $\delta\leq1$ for instance, we obtain
\beq\label{eq:Linfbound1}\sup_{x\in(0,\underline x)}\tau_\eta(x)\U_\eta^\delta(x)\leq\f{1+2B_m/\underline x^m}{1-2\rho}:=C\eeq
so $\tau_\eta\U_\eta^\delta$ is uniformly bounded in a neighborhood of zero.\\
Now we can prove a bound $B_\al$ for $x^\al\beta\U_\eta^\delta$ in the case $0\leq\al<m.$ Thanks to the estimates \eqref{eq:L1bound1} and \eqref{eq:Linfbound1} we have
\begin{eqnarray}\label{eq:L1bound2}
\int x^\al\beta(x)\U_\eta^\delta(x)\,dx&=&\int_0^{\overline x}x^\al\beta(x)\U_\eta^\delta(x)\,dx+\int_{\overline x}^Rx^\al\beta(x)\U_\eta^\delta(x)\,dx\nonumber\\
&\leq&\overline x^\al\sup_{(0,\overline x)}\{\tau_\eta\U_\eta^\delta\}\int_0^{\overline x}\f{\beta(y)}{\tau_\eta(y)}\,dy+\overline x^{\al-m}\int_{\overline x}^Rx^m\beta(x)\U_\eta^\delta(x)\,dx\nonumber\\
&\leq&C\rho\overline x^\al+B_m\overline x^{\al-m}:=B_\al.
\end{eqnarray}
Combining \eqref{eq:L1bound1} and \eqref{eq:L1bound2} we obtain
\beq\label{eq:L1bound3}\forall\al\geq0,\,\exists B_\al:\ \forall\eta,\delta>0,\quad\int x^\al\beta(x)\U_\eta^\delta(x)\,dx\leq B_\al,\eeq
and finally we bound $\lb_\eta^\delta$
\beq\label{eq:lambdaupperbound}\lb_\eta^\delta=\delta +\int\beta\U_\eta^\delta\leq\delta +B_0.\eeq
So the family $\{\lb_\eta^\delta\}_\delta$ belong to a compact interval and we can extract a converging subsequence \mbox{$\lb_\eta^\delta\underset{\delta\to0}{\longrightarrow}\lb_\eta.$}

\

\paragraph{\it Second estimate : $W^{1,1}$bound for $x^\al\tau_\eta\U_\eta^\delta,\ \al\geq0.$}
We use the estimate \eqref{eq:L1bound3}. First we give a $L^\infty$bound for $\tau_\eta\U_\eta^\delta$ by integrating \eqref{eq:truncated} between $0$ and $x$
\beq\label{eq:Linfbound2}\tau_\eta(x)\U_\eta^\delta(x)\leq\delta+2\int_0^R\beta(y)\U_\eta^\delta(y)\,dy\leq\delta+2B_0:=D_0.\eeq
Then we bound $x^\al\tau_\eta\U_\eta^\delta$ in $L^1$ for $\al>-1.$ Assumption \eqref{as:betatauinf} ensures that there exists $X>0$ such that\\
$\tau(x)\leq x\beta(x),\ \forall x\geq X,$ so we have for $R>X$
\begin{eqnarray*}
\int x^\al\tau_\eta(x)\U_\eta^\delta(x)\,dx&\leq&\sup_{(0,X)}\{\tau_\eta\U_\eta^\delta\}\int_0^X x^\al\,dx+\int_X^Rx^{\al+1}\beta(x)\U_\eta^\delta(x)\,dx\\
&\leq&\sup_{(0,X)}\{\tau_\eta\U_\eta^\delta\}\f{X^{\al+1}}{\al+1}+B_{\al+1}:=C_\al.
\end{eqnarray*}
Finally
\beq\label{eq:L1bound4}
\forall\al>-1,\,\exists C_\al:\ \forall\eta,\delta>0,\quad\int x^\al\tau_\eta(x)\U_\eta^\delta(x)\,dx\leq C_\al
\eeq
and we also have that $x^\al\U_\eta^\delta$ is bounded in $L^1$ because $\tau\in{\mathcal P}$ (see assumption \eqref{as:betatauspace}).\\
A consequence of \eqref{eq:L1bound3} and \eqref{eq:L1bound4} is that $x^\al\tau_\eta\U_\eta^\delta$ is bound in $L^\infty$ for all $\al\geq0.$ We already have \eqref{eq:Linfbound2} and for $\al>0$, we multiply \eqref{eq:truncated} by $x^\al,$ integrate on $[0,x]$ and obtain
$$
x^\al\tau_\eta(x)\U_\eta^\delta(x)\leq\al\int_0^R y^{\al-1}\tau_\eta(y)\U_\eta^\delta(y)\,dy+2\int_0^R y^\al\beta(y)\U_\eta^\delta(y)\,dy\leq\al C_\al+2B_\al:=D_\al,
$$
that give immediately
\beq\label{eq:Linfbound3}
\forall\al\geq0,\,\exists D_\al:\ \forall\eta,\delta>0,\quad\sup_{x>0}x^\al\tau_\eta(x)\U_\eta^\delta(x)\leq D_\al.
\eeq
To conclude we use the fact that neither the parameters nor $\U_\eta^\delta$ are negative and we find by the chain rule, for $\al\geq0$
$$\int\bigl|\f\p{\p x}(x^\al\tau_\eta(x)\U_\eta^\delta(x))\bigr|dx\leq\al\int x^{\al-1}\tau_\eta(x)\U_\eta^\delta(x)\,dx+\int x^\al\bigl|\p_x(\tau_\eta(x)\U_\eta^\delta(x))\bigr|\,dx\nonumber$$
\beq\label{eq:W11bound}\hspace{4cm}\leq\al\int x^{\al-1}\tau_\eta(x)\U_\eta^\delta(x)\,dx+\lb_\eta^\delta\int x^\al\U_\eta^\delta(x)\,dx+3\int x^\al\beta(x)\U_\eta^\delta(x)\,dx
\eeq
and all the terms in the right hand side are uniformly bounded thanks to the previous estimates.\\

\

Since we have proved that the family $\{x^\al\tau_\eta\U_\eta^\delta\}_\delta$ is bounded in $W^{1,1}(\R^+)$ for all $\al\geq0,$ then, because $\tau_\eta$ is positive and belongs to ${\mathcal P},$ we can extract from $\{\U_\eta^\delta\}_\delta$ a subsequence which converges in $L^1(\R^+)$ when $\delta\to0.$ Passing to the limit in equation \eqref{eq:truncated} we find that
\beq\label{eq:truncated2}\left\{\begin{array}{l}\dis\f{\p}{\p x} (\tau_\eta(x) \U_\eta(x))+(\beta(x)+\lb_{\eta})\U_\eta(x) = 2 \int_x^\infty\beta(y)\kappa(x,y) \U_\eta(y)\,dy,\\
\\
\U_\eta(0)=0,\quad\U_\eta(x)\geq0,\quad\int\U_\eta=1,\end{array}\right.\eeq
with $\lb_\eta\geq0.$

\

\subsection{Limit as $\eta\to0$ for $\U_\eta$ and $\lb_\eta$}

All the estimates \eqref{eq:L1bound1}-\eqref{eq:W11bound} remain true for $\delta=0.$ So we still know that the family $\{x^\al\tau_\eta\U_\eta\}_\eta$ belongs to a compact set of $L^1,$ but not necessarily $\{\U_\eta\}_\eta$ because in the limit $\tau$ can vanish at zero. We need one more estimate to study the limit $\eta\to0.$

\paragraph{\it Third estimate: $L^\infty$bound for $x^\al\tau_\eta\U_\eta,\ \al\geq-\gamma$.}
We already know that $x^\al\tau_\eta\U_\eta$ is bounded for $\al\geq0.$ So, to prove the bound, it only remains to prove that $x^{-\gamma}\tau_\eta\U_\eta$ is bounded in a neighborhood of zero. Let define $f_\eta:x\mapsto\sup_{(0,x)}\tau_\eta\U_\eta.$ If we integrate \eqref{eq:truncated2} between $0$ and $x'<x,$ we find
$$\tau_\eta(x')\U_\eta(x')\leq2\int_0^{x'}\int\beta(y)\U_\eta(y)\kappa(z,y)\,dy\,dz\leq2\int_0^x\int\beta(y)\U_\eta(y)\kappa(z,y)\,dy\,dz$$
and so for all $x$
$$f_\eta(x)\leq2\int_0^x\int\beta(y)\U_\eta(y)\kappa(z,y)\,dy\,dz.$$
We consider $x_\eta$ and $\underline x$ defined in the first estimate and, using \eqref{as:kappatau} and \eqref{as:betatau0}, we have for all $x<x_\eta$
\begin{eqnarray*}
f_\eta(x)&\leq&2\int_0^x\int\beta(y)\U_\eta(y)\kappa(z,y)\,dy\,dz\\
&=&2\int\beta(y)\U_\eta(y)\int_0^x\kappa(z,y)\,dz\,dy\\
&\leq&2\int_0^\infty\beta(y)\U_\eta(y)\min\Bigl(1,C\Bigl(\f x y\Bigr)^\gamma\Bigr)\,dy\\
&=&2\int_0^x\beta(y)\U_\eta(y)\,dy+2C\int_x^{x_\eta}\beta(y)\U_\eta(y)\Bigl(\f x y\Bigr)^\gamma \,dy+2C\int_{x_\eta}^\infty\beta(y)\U_\eta(y)\Bigl(\f x y\Bigr)^\gamma \,dy\\
&=&2\int_0^x\f{\beta(y)}{\tau_\eta(y)}\tau_\eta(y)\U_\eta(y)\,dy+2Cx^\gamma\int_x^{x_\eta}\f{\beta(y)}{\tau_\eta(y)}\f{\tau_\eta(y)\U_\eta(y)}{y^\gamma}\,dy+2C\int_{x_\eta}^\infty\beta(y)\U_\eta(y)\Bigl(\f x y\Bigr)^\gamma \,dy\\
&\leq&2f_\eta(x)\int_0^{x_\eta}\f{\beta(y)}{\tau_\eta(y)}\,dy+2Cx^\gamma\int_x^{x_\eta}\f{\beta(y)}{\tau_\eta(y)}\f{f_\eta(y)}{y^\gamma}\,dy+2C\|\beta\U_\eta\|_{L^1}\f{x^\gamma}{x_\eta^\gamma}.
\end{eqnarray*}
We set ${\mathcal V}_\eta(x)=x^{-\gamma}f_\eta(x)$ and we obtain
$$(1-2\rho){\mathcal V}_\eta(x)\leq K+2C\int_x^{x_\eta}\f{\beta(y)}{\tau_\eta(y)}{\mathcal V}_\eta(y)\,dy.$$
Hence, using Gr\"onwall's lemma, we find that $\dis{\mathcal V}_\eta(x)\leq\f {Ke^{\f{2C\rho}{1-2\rho}}}{1-2\rho}$ and consequently \beq\label{eq:0Linfbound}x^{-\gamma}\tau_\eta(x)\U_\eta(x)\leq\f {Ke^{\f{2C\rho}{1-2\rho}}}{1-2\rho}:=\wt C,\quad\forall x\in[0,\underline x].\eeq

\

This last estimate allows us to bound $\U_\eta$ by $\f{x^\gamma}{\tau}$ which is in $L^1_0$ by the assumption \eqref{as:kappatau}. Thanks to the second estimate, we also have that $\int x^\al\U_\eta$ is bounded in $L^1$ and so, thanks to the Dunford-Pettis theorem (see \cite{Brezis} for instance), $\{\U_\eta\}_\eta$ belong to a $L^1$-weak compact set. Thus we can extract a subsequence which converges $L^1-$weak toward $\U.$ But for all $\e>0,\ \{x^\al\U_\eta\}_\eta$ is bounded in $W^{1,1}([\e,\infty))$ for all $\al\geq1$ thanks to \eqref{eq:W11bound} and so the convergence is strong on $[\e,\infty).$ Then we write
\begin{eqnarray*}
\int|\U_\eta-\U|&=&\int_0^\e|\U_\eta-\U|+\int_\e^\infty|\U_\eta-\U|\\
&\leq&2\wt C\int_0^\e\f{x^\gamma}{\tau(x)}+\int_\e^\infty|\U_\eta-\U|.
\end{eqnarray*}
The first term on the right hand side is small for $\e$ small because $\f{x^\gamma}{\tau}\in L^1_0$ and then the second term is small for $\eta$ small because of the strong convergence. Finally $\U_\eta\underset{\eta\to0}{\longrightarrow}\U$ strongly in $L^1(\R^+)$ and $\U$ solution of the eigenproblem \eqref{eq:eigenproblem}.

\

\subsection{Limit as $\delta,\eta\to0$ for $\phi_\eta^\delta$}

We prove uniform estimates on $\phi_\eta^\delta$ which are enough to pass to the limit and prove the result.

\paragraph{\it Fourth estimate : uniform $\phi_\eta^\delta$-bound on $[0,A]$.}
Let $A>0,$ our first goal is to prove the existence of a constant $C_0(A)$ such that
$$\forall\eta,\delta,\qquad\sup_{(0,A)}{\phi_\eta^\delta}\leq C_0(A).$$
We divide the equation on $\phi_\eta^\delta$ by $\tau_\eta$ and we integrate between $x$ and $x_\eta$ with $0<x<x_\eta,$ where $x_\eta,$ bounded by $\underline x$ and $\overline x,$ is defined in the first estimate. Considering $\delta<\f{\mu(1-2\rho)}{\overline x}$ (fulfilled for $R>\f{\overline x}{2(1-2\rho)}$ since $\delta=\f{\mu}{2R}$), we find
\begin{eqnarray*}
\phi_\eta^\delta(x)&\leq&\phi_\eta^\delta(x_\eta)+2\int_x^{x_\eta}\f{\beta(y)}{\tau_\eta(y)}\int_0^y\kappa(z,y)\phi_\eta^\delta(z)\,dz+x_\eta\f\delta \mu\phi_\eta^\delta(0)\\
&\leq&\phi_\eta^\delta(x_\eta)+\sup_{(0,x_\eta)}\{\phi_\eta^\delta\}\Bigl(2\int_0^{x_\eta}\f{\beta(y)}{\tau_\eta(y)}\int_0^y\kappa(z,y)\,dz+x_\eta\f\delta\mu\Bigr)
\end{eqnarray*}
and we obtain
$$\sup_{x\in(0,\underline x)}{\phi_\eta^\delta(x)}\leq\f 1 {1-2\rho-\delta\overline x/\mu}\phi_\eta^\delta(x_\eta).$$
Using the decay of $\phi_\eta^\delta(x)e^{-\int_{\underline x}^x\f{\beta+\lb_\eta^\delta}{\tau_\eta}},$ there exists $C(A)$ such that
$$\sup_{x\in(0,A)}{\phi_\eta^\delta(x)}\leq C(A)\phi_\eta^\delta(x_\eta).$$
Noticing that $\int\phi_\eta^\delta(x)\U_\eta^\delta(x)dx =1,$ we conclude
$$1\geq\int_0^{x_\eta}\phi_\eta^\delta(x)\U_\eta^\delta(x)dx\geq \phi_\eta^\delta(x_\eta)\int_0^{x_\eta}e^{-\int_x^{x_\eta}\f{\beta+\lb_\eta^\delta}{\tau_\eta}}\U_\eta^\delta(x)\,dx,$$
so, as $x_\eta\to x_0$ and $\int_0^{x_0}\U(x)dx>0$ (thanks to Lemma \ref{lm:positivity} and because $x_0>b\geq a$), we have
\beq\label{eq:phi0bound}\sup_{(0,A)}{\phi_\eta^\delta}\leq C_0(A).\eeq

\

\paragraph{\it Fifth estimate : uniform $\phi_\eta^\delta$-bound on $[A,\infty)$.}
Following an idea introduced in \cite{PR} we notice that the equation in \eqref{eq:truncated} satisfied by $\phi_\eta^\delta$ is a transport equation and therefore satisfies the maximum principle (see Lemma \ref{lm:supersolution} in the Appendix). Therefore it remains to build a supersolution $\overline\phi$ that is positive at $x=R,$ to conclude $\phi_\eta^\delta(x)\leq\overline\phi(x)$ on $[0,R].$

This we cannot do on $[0,R],$ but on a subinterval $[A_0,R]$ only. So we begin with an auxiliary function $\overline\vp(x)=x^k+\theta$ with $k$ and $\theta$ positive numbers to be determined. We have to check that on $[A_0,R]$
$$-\tau(x)\f{\p}{\p x}\overline\vp(x)+(\lb_\eta^\delta+\beta(x))\overline\vp(x)\geq 2\beta(x)\int\kappa(y,x)\overline\vp(y)\,dy+\delta\phi_\eta^\delta(0),$$
{\it i.e.}
$$-k\tau(x)x^{k-1}+(\lb_\eta^\delta+\beta(x))\overline\vp(x)\geq\Bigl(2\theta+2\int\kappa(y,x)y^k\,dy\Bigr)\beta(x)+\delta\phi_\eta^\delta(0).$$
For $k\geq2,$ we know that $\int\kappa(y,x)\f{y^k}{x^k}\,dy\leq c<1/2$ so it is sufficient to prove that there exists $A_0>0$ such that we have
\beq\label{eq:sursolution1}-k\tau(x)x^{k-1}+(\lb_\eta^\delta+\beta(x))(x^k+\theta)\geq(2\theta+2cx^k)\beta(x)+\delta C_0(1)\eeq
for all $x>A_0,$ where $C_0$ is defined in \eqref{eq:phi0bound}.
For this, dividing \eqref{eq:sursolution1} by $x^{k-1}\tau(x),$ we say that if we have
\beq\label{eq:sursolution2}(1-2c)\f{x\beta(x)}{\tau(x)}\geq k+\f{2\theta\beta(x)+\delta C_0(1)}{x^{k-1}\tau(x)},\eeq
then \eqref{eq:sursolution1} holds true. Thanks to assumptions \eqref{as:betatauspace} and \eqref{as:betatauinf} we know that there exists $k>0$ such that for any $\theta>0,$ there exists $A_0>0$ for which \eqref{eq:sursolution2} is true on $[A_0,+\infty).$

Then we conclude by choosing the supersolution $\overline\phi(x)=\f{C_0(A_0)}\theta\overline\vp(x)$ so that
$$\overline\phi(x)\geq\phi_\eta^\delta(x)\quad \text{on} \ [0,A_0],$$
and on $[A_0,R],$ we have
\beq\label{eq:supersol}\left\{\begin{array}{l}
-\tau(x)\f{\p}{\p x}\overline\phi(x)+(\lb_\eta^\delta+\beta(x))\overline\phi(x) \geq 2\beta(x)\int_0^x\kappa(y,x)\overline\phi(y)\,dy+\delta\phi_\eta^\delta(0),
\\
\\
\overline\phi(R)>0,
\end{array}\right.\eeq
which is a supersolution to the equation satisfied by $\phi_\eta^\delta.$ Therefore $\phi_\eta^\delta\leq\overline\phi$ uniformly in $\eta$ and $\delta$ and we get
\beq\label{eq:phibound}\exists k,\theta,C\ s.t.\ \forall\eta,\delta,\quad\phi_\eta^\delta(x)\leq(Cx^k+\theta).\eeq

\

Equation \eqref{eq:truncated} and the fact that $\phi_\eta^\delta$ is uniformly bounded in $L^\infty_{loc}(\R^+)$ give immediately that $\p_x\phi_\eta^\delta$ is uniformly bounded in $L^\infty_{loc}(\R^+,\tau(x)dx),$ so in $L^\infty_{loc}(0,\infty)$ thanks to \eqref{as:taupositivity}.

\

Then we can extract a subsequence of $\{\phi_\eta^\delta\}$ which converges ${\mathcal C}^0(0,\infty)$ toward $\phi.$ Now we check that $\phi$ satisfied the adjoint equation of \eqref{eq:eigenproblem}. We consider the terms of \eqref{eq:truncated} one after another.\\
First $(\lb_\eta^\delta+\beta(x))\phi_\eta^\delta(x)$ converges to $(\lb+\beta(x))\phi(x)$ in $L^\infty_{loc}.$\\
For $\p_x\phi_\eta^\delta,$ we have an $L^\infty$ bound on each compact of $(0,\infty).$ So it converges $L^\infty-*weak$ toward $\p_x\phi.$\\
It remains the last term which we write, for all $x>0,$ $$\int_0^x\kappa(y,x)(\phi_\eta^\delta(y)-\phi(y))\,dy\leq\|\phi_\eta^\delta-\phi\|_{L^\infty(0,x)}\underset{\eta,\delta\to0}{\longrightarrow}0.$$
The fact that $\int\phi\U=1$ comes from the convergence $L^\infty-L^1$ when written as
$$1=\int\phi_\eta^\delta(x)\U_\eta^\delta(x)\,dx=\int\f{\phi_\eta^\delta(x)}{1+x^k}(1+x^k)\U_\eta^\delta(x)\,dx\longrightarrow\int\f{\phi(x)}{1+x^k}(1+x^k)\U(x)\,dx=\int\phi\U.$$

\bigskip\bigskip

At this stage we have found $(\lb,\U,\phi)\in\R^+\times L^1(\R^+)\times{\mathcal C}(\R^+)$ solution of \eqref{eq:eigenproblem}. The estimates announced in Theorem \ref{th:eigenelements} also follow from those uniform estimates. It remains to prove that $\lb>0$ and the uniqueness.

\

\subsection{Proof of $\lb>0$}

We prove a little bit more, namely that
\beq\label{eq:lowerbound}\lb\geq\f12\sup_{x\geq0}\{\tau(x)\U(x)\}. \eeq

We integrate the first equation of \eqref{eq:eigenproblem} between $0$ and $x$ and find
\begin{eqnarray*}
0\leq\lb\int_0^x\U(y)\,dy&=&-\tau(x)\U(x)-\int_0^x\beta(y)\U(y)\,dy+2\int_0^x\int_z^\infty\beta(y)\kappa(z,y)\U(y)\,dy\,dz\\
 &\leq&-\tau(x)\U(x)+2\int_0^\infty\int_z^\infty\beta(y)\kappa(z,y)\U(y)\,dy\,dz\\
 &=&-\tau(x)\U(x)+2\int_0^\infty\beta(y)\U(y)\,dy\\
 &=&-\tau(x)\U(x)+2\lb,
\end{eqnarray*}
Hence $2\lb\geq \tau(x)\U(x)$ and \eqref{eq:lowerbound} is proved.

\

\subsection{Uniqueness}

We follow the idea of \cite{M1}. Let $(\lb_1,\U_1,\phi_1)$ and $(\lb_2,\U_2,\phi_2)$ two solutions to the eigenproblem \eqref{eq:eigenproblem}.
First we have
\begin{eqnarray*}
\lb_1\int\U_1(x)\phi_2(x)\,dx&=&\int\Bigl(-\p_x(\tau(x)\U_1(x))-\beta(x)\U_1(x)+2\int_x^\infty\beta(y)\kappa(x,y)\U_1(y)\,dy\Bigr)\phi_2(x)\,dx\\
&=&\int\Bigl(\tau(x)\p_x\phi_2(x)-\beta(x)\phi_2(x)+2\beta(x)\int_0^x\kappa(y,x)\phi_2(y)\,dy\Bigr)\U_1(x)\,dx\\
&=&\lb_2\int\U_1(x)\phi_2(x)\,dx
\end{eqnarray*}
and then $\lb_1=\lb_2=\lb$ because $\int\U_1\phi_2>0$ thanks to Lemma \ref{lm:positivity}. \\
For the eigenvectors we use the General Relative Entropy method introduced in \cite{MMP1,MMP2}. For $C>0,$ we test the equation on $\U_1$ against $\sgn\bigl(\f{\U_1}{\U_2}-C\bigr)\phi_1,$
$$0=\int\Bigl[\p_x(\tau(x)\U_1(x))+(\lb+\beta(x))\U_1(x)-2\int_x^\infty\beta(y)\kappa(x,y)\U_1(y)\,dy\,\Bigr]\sgn\Bigl(\f{\U_1}{\U_2}(x)-C\Bigr)\phi_1(x)\,dx.$$
Deriving $\Bigl|\f{\U_1}{\U_2}(x)-C\Bigr|\tau(x)\U_2(x)\phi_1(x)$ we find
$$\begin{array}{l}\dis
\int\p_x(\tau(x)\U_1(x))\sgn\Bigl(\f{\U_1}{\U_2}(x)-C\Bigr)\phi_1(x)\,dx=\int\p_x\Bigl(\Bigl|\f{\U_1}{\U_2}(x)-C\Bigr|\tau(x)\U_2(x)\phi_1(x)\Bigr)\,dx\\
\\
\dis\hspace{1cm}+\int\p_x(\tau(x)\U_2(x))\f{\U_1}{\U_2}(x)\sgn\Bigl(\f{\U_1}{\U_2}(x)-C\Bigr)\phi_1(x)\,dx-\int\Bigl|\f{\U_1}{\U_2}(x)-C\Bigr|\p_x(\tau(x)\U_2(x)\phi_1(x))\,dx
\end{array}$$

and then

$$\begin{array}{l}\dis
\int\p_x(\tau(x)\U_1(x))\sgn\Bigl(\f{\U_1}{\U_2}(x)-C\Bigr)\phi_1(x)\,dx=\\
\\
\dis\hspace{2cm}2\int\Bigl|\f{\U_1}{\U_2}(x)-C\Bigr|\Bigl[\int_0^x\beta(x)\kappa(y,x)\U_2(x)\phi_1(y)\,dy-\int_x^\infty\beta(y)\kappa(x,y)\U_2(y)\phi_1(x)\,dy\Bigr]\,dx\\
\\
\dis\hspace{4cm}+2\int\int_x^\infty\beta(y)\kappa(x,y)\U_2(y)\,dy\,\f{\U_1}{\U_2}(x)\sgn\Bigl(\f{\U_1}{\U_2}(x)-C\Bigr)\phi_1(x)\,dx\\
\\
\dis\hspace{6cm}-\int(\lb+\beta(x))\f{\U_1}{\U_2}(x)\sgn\Bigl(\f{\U_1}{\U_2}(x)-C\Bigr)\U_2(x)\phi_1(x)\,dx,
\end{array}$$

\

$$\begin{array}{l}\dis
\int\p_x(\tau(x)\U_1(x))\sgn\Bigl(\f{\U_1}{\U_2}(x)-C\Bigr)\phi_1(x)\,dx=\\
\\
\dis\hspace{2cm}2\int\int\beta(y)\kappa(x,y)\Bigl[\Bigl|\f{\U_1}{\U_2}(y)-C\Bigr|-\Bigl|\f{\U_1}{\U_2}(x)-C\Bigr|\Bigr]\U_2(y)\phi_1(x)\,dxdy\\
\\
\dis\hspace{4cm}+2\int\int_x^\infty\beta(y)\kappa(x,y)\U_2(y)\,dy\,\f{\U_1}{\U_2}(x)\sgn\Bigl(\f{\U_1}{\U_2}(x)-C\Bigr)\phi_1(x)\,dx\\
\\
\dis\hspace{6cm}-\int(\lb+\beta(x))\f{\U_1}{\U_2}(x)\sgn\Bigl(\f{\U_1}{\U_2}(x)-C\Bigr)\U_2(x)\phi_1(x)\,dx.
\end{array}$$

\

So
$$\begin{array}{l}\dis
0=2\int\int\beta(y)\kappa(x,y)\Bigl[\Bigl|\f{\U_1}{\U_2}(y)-C\Bigr|-\Bigl|\f{\U_1}{\U_2}(x)-C\Bigr|\Bigr]\U_2(y)\phi_1(x)\,dxdy\\
\\
\dis\hspace{4cm}+2\int\int_x^\infty\beta(y)\kappa(x,y)\U_2(y)\,dy\,\f{\U_1}{\U_2}(x)\sgn\Bigl(\f{\U_1}{\U_2}(x)-C\Bigr)\phi_1(x)\,dx\\
\\
\dis\hspace{6cm}-2\int\int_x^\infty\beta(y)\kappa(x,y)\U_1(y)\,dy\,\sgn\Bigl(\f{\U_1}{\U_2}(x)-C\Bigr)\phi_1(x)\,dx
\end{array}$$

\

$$0=\int\int\beta(y)\kappa(x,y)\U_2(y)\Bigl|\f{\U_1}{\U_2}(y)-C\Bigr|\Bigl[1-\sgn\Bigl(\f{\U_1}{\U_2}(x)-C\Bigr)\sgn\Bigl(\f{\U_1}{\U_2}(y)-C\Bigr)\Bigr]\phi_1(x)\,dxdy.$$

\

Hence $\Bigl[1-\sgn\Bigl(\f{\U_1}{\U_2}(x)-C\Bigr)\sgn\Bigl(\f{\U_1}{\U_2}(y)-C\Bigr)\Bigr]=0$ on the support of $\kappa(x,y)$ for all $C$ thus $\f{\U_1}{\U_2}(x)=\f{\U_1}{\U_2}(y)$ on the support of $\kappa(x,y)$ and
\beq\label{eq:uniqueness}\p_x\f{\U_1}{\U_2}(x)=\int\beta(y)\kappa(x,y)\Bigl(\f{\U_1}{\U_2}(y)-\f{\U_1}{\U_2}(x)\Bigr)\f{\U_2(y)}{\U_2(x)}\,dy=0\eeq
so $\dis\f{\U_1}{\U_2}\equiv cst=1.$

\

We can prove in the same way that $\phi_1=\phi_2$ even if we can have $\U\equiv0$ on $[0,m]$ with $m>0.$ Indeed in this case we know that $\beta\equiv0$ on $[0,m]$ and so
$$\phi_i(x)=\phi_i(0)e^{\int_0^x\f{\lb}{\tau(s)}ds}\quad\forall x\in[0,m],\ i\in\{1,2\}.$$

\

\section{Conclusion, Perspectives}\label{se:csq}

We have proved the existence and uniqueness of eigenelements for the aggregation-fragmentation equation \eqref{eq:temporel} with assumptions on the parameters as large as possible, in order to render out the widest variety of biological or physical models. It gives access to the asymptotic behaviour of the solution by the use of the General Relative Entropy principle. 

\

A following work is to study the dependency of the eigenvalue $\lb$ on parameters $\tau$ and $\beta$ (see \cite{M2}). For instance, our assumptions allow $\tau$ to vanish at zero, what is a necessary condition to ensure that $\lb$ tends to zero when the fragmentation tends to infinity. Such results give precious information on the qualitative behaviour of the  solution.

\

Another possible extension of the present work is to prove existence of eigenelements in the case of time-periodic parameters, using the Floquet's theory, and then compare the new $\lb_F$ with the time-independent one $\lb$ (see \cite{Lepoutre}). Such studies can help to choose a right strategy in order to optimize, for instance, the total mass $\int x u(t,x) dx$  in the case of prion proliferation (see \cite{CL1}) or on the contrary minimize the total population $\int u(t,x) dx$ in the case of cancer therapy (see \cite{Lepoutre, Clairambault}).

\

Finally, this eigenvalue problem could be used to recover some of the equation parameters like $\tau$ and $\beta$ from the knowledge of the asymptotic profile of the solution, as introduced in \cite{DPZ, PZ} in the case of symmetric division ($\tau=1$ and $\kappa=\delta_{x=\frac{y}{2}}$), by the use of inverse problems techniques. The method of \cite{PZ} has to be adapted to our general case, in order to model prion proliferation for instance, or yet to recover the aggregation rate $\tau$ ; this is another direction for future research.

\vspace{1cm}

{\bf Aknowledgment}

The authors thank a lot Beno\^it Perthame for his precious help and his corrections.

\newpage{\noindent\LARGE\bf Appendix}

\appendix

\section{Assumption on $\kappa.$}

\begin{lemma}\label{lm:kappa}
Assumptions \eqref{as:kappa1},\eqref{as:kappa2} and \eqref{as:kappatau} with $\gamma>0$ imply that
$$\inf_y\ \lim_{\eta\to0}\int_{\eta y}^{(1-\eta)y}\kappa(x,y)\,dx>0,$$
which means that polymers undergo a decrease in the size during fragmentation process. As a consequence, assumption \eqref{as:kappa3} holds true.
\end{lemma}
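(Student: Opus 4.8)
The plan is to reduce the statement to an upper bound, uniform in $y$, on the masses of the two possible atoms of the probability measure $\kappa(\cdot,y)$ at the endpoints $x=0$ and $x=y$. Fix $y>0$; by \eqref{as:kappa1}, $\kappa(\cdot,y)$ is a probability measure supported in $[0,y]$. As $\eta\downarrow0$ the intervals $[\eta y,(1-\eta)y]$ increase to the open interval $(0,y)$, so by continuity of measures from below the limit in the statement exists and equals
$$\lim_{\eta\to0}\int_{\eta y}^{(1-\eta)y}\kappa(x,y)\,dx=\kappa\bigl((0,y),y\bigr)=1-\kappa(\{0\},y)-\kappa(\{y\},y).$$
Hence it suffices to show that $\kappa(\{0\},y)$ and $\kappa(\{y\},y)$ stay below a constant strictly less than $1$ in total.

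First I would handle the atom at $0$: the first part of \eqref{as:kappatau} gives $\kappa(\{0\},y)\le\int_0^x\kappa(z,y)\,dz\le C(x/y)^\gamma$ for every $x>0$, and letting $x\to0$ together with $\gamma>0$ forces $\kappa(\{0\},y)=0$. (This is the only place the hypothesis $\gamma>0$ enters — for $\gamma=0$, as in the renewal equation, such an atom survives and the conclusion fails.) Next, for the atom at $y$, assumption \eqref{as:kappa2} yields $\tfrac{y}{2}=\int_{[0,y]}x\,\kappa(x,y)\,dx\ge y\,\kappa(\{y\},y)$, so $\kappa(\{y\},y)\le\tfrac12$. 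Substituting both bounds, the displayed limit is $\ge1-0-\tfrac12=\tfrac12$ for every $y>0$, and therefore its infimum over $y$ is $\ge\tfrac12>0$, which is the first assertion.

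For the stated consequence \eqref{as:kappa3}, I would start from the elementary identity $(x/y)^2=(x/y)-(x/y)(1-x/y)$ on $[0,y]$, which combined with \eqref{as:kappa2} gives
$$\int\frac{x^2}{y^2}\,\kappa(x,y)\,dx=\frac12-\int\frac{x}{y}\Bigl(1-\frac{x}{y}\Bigr)\kappa(x,y)\,dx,$$
and then bound the last integral below, using $t(1-t)\ge\eta(1-\eta)$ for $t\in[\eta,1-\eta]$, by $\eta(1-\eta)\,\kappa\bigl([\eta y,(1-\eta)y],y\bigr)$. The one genuinely non-trivial step is to produce, for a single well-chosen small $\eta$, a lower bound on this ``middle mass'' that is uniform in $y$: writing $\kappa([\eta y,(1-\eta)y],y)=1-\kappa([0,\eta y),y)-\kappa(((1-\eta)y,y],y)$ and estimating the first term by $C\eta^\gamma$ from \eqref{as:kappatau} and the second by $\tfrac{1}{2(1-\eta)}$ from \eqref{as:kappa2} exactly as above, one obtains $\kappa([\eta y,(1-\eta)y],y)\ge 1-C\eta^\gamma-\tfrac{1}{2(1-\eta)}$, whose right-hand side tends to $\tfrac12$ as $\eta\to0$; fixing $\eta$ small enough makes it $\ge\kappa_0$ for some $\kappa_0>0$ independent of $y$. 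This delivers $\int(x^2/y^2)\kappa(x,y)\,dx\le\tfrac12-\eta(1-\eta)\kappa_0=:c<\tfrac12$ uniformly in $y$. I expect this uniform middle-mass bound to be the main obstacle: it is what pins a definite fraction of the fragmentation mass strictly between the two endpoints, and it genuinely relies on \eqref{as:kappatau} to keep mass away from $0$ and on \eqref{as:kappa2} to keep mass away from $y$; the remaining manipulations are routine.
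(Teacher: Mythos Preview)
Your argument is correct and follows essentially the same route as the paper: both bound the mass near $0$ by $C\eta^\gamma$ via \eqref{as:kappatau} and the mass near $y$ by $\tfrac1{2(1-\eta)}$ via \eqref{as:kappa2}, yielding the uniform middle-mass bound $1-C\eta^\gamma-\tfrac1{2(1-\eta)}\to\tfrac12$. Your treatment of \eqref{as:kappa3} via the identity $t^2=t-t(1-t)$ and the parabola bound $t(1-t)\ge\eta(1-\eta)$ on $[\eta,1-\eta]$ is a mild repackaging of the paper's splitting $\int t^2\kappa\le\tfrac12-\eta\!\int_{\text{middle}}t\,\kappa\le\tfrac12-\eta^2\!\int_{\text{middle}}\kappa$, giving the same conclusion with the slightly sharper constant $\eta(1-\eta)$ in place of $\eta^2$.
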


\begin{proof}
With the first assumption \eqref{as:kappa1} we have
$$1=\int_0^y\kappa(x,y)\,dx=\int_0^{\eta y}\kappa(x,y)\,dx+\int_{\eta y}^{(1-\eta)y}\kappa(x,y)\,dx+\int_{(1-\eta)y}^y\kappa(x,y)\,dx.$$
The two other assumptions \eqref{as:kappa2} and \eqref{as:kappatau} allow to control the mass of $\kappa$ at the ends :
\begin{eqnarray*}
\int_{\eta y}^{(1-\eta)y}\kappa(x,y)\,dx&=&1-\int_0^{\eta y}\kappa(x,y)\,dx-\int_{(1-\eta)y}^y\kappa(x,y)\,dx\\
 &\geq&1-C\eta^\gamma-\f1{1-\eta}\int_{(1-\eta)y}^y\f xy\kappa(x,y)\,dx\\
 &\geq&1-C\eta^\gamma-\f1{2(1-\eta)}\quad\underset{\eta\to0}{\longrightarrow}\quad\f12,
\end{eqnarray*}
which gives the first assertion of the lemma.
\\
Now we can prove \eqref{as:kappa3} :
\begin{eqnarray*}
\int_0^y\f{x^2}{y^2}\kappa(x,y)\,dx&\leq&\Bigr[\int_0^{\eta y}\f xy\kappa(x,y)\,dx+\int_{(1-\eta)y}^y\f xy\kappa(x,y)\,dx\Bigl]+\int_{\eta y}^{(1-\eta)y}\f{x^2}{y^2}\kappa(x,y)\,dx\\
 &\leq&\Bigr[\f12-\int_{\eta y}^{(1-\eta)y}\f xy\kappa(x,y)\,dx\Bigl]+(1-\eta)\int_{\eta y}^{(1-\eta)y}\f xy\kappa(x,y)\,dx\\
 &=&\f12-\eta\int_{\eta y}^{(1-\eta)y}\f xy\kappa(x,y)\,dx\\
 &\leq&\f12-\eta^2\int_{\eta y}^{(1-\eta)y}\kappa(x,y)\,dx.
\end{eqnarray*}
We use the first part of the proof to conclude. Taking $\eta=\min\Bigl(\f14,\f1{(4C)^{1/\gamma}}\Bigr)$ for instance, we obtain
$$\int_{\eta y}^{(1-\eta)y}\kappa(x,y)\,dx\geq\f13,$$
and the lemma is proved for $c=\f12-\f1{48}.$
\qed
\end{proof}

\

\section{Krein-Rutman}\label{se:KR}

We prove existence of solution for the truncated equation \eqref{eq:truncated}. In this part $\eta$ and $\delta$ are fixed (with $\delta R<\mu$), so we will omit these indices for $\tau,\ \lb,\ \U$ and $\phi$ but we keep in mind that $\tau(x)\geq\mu>0.$\\
We use the Krein-Rutman theorem which requires working in the space of continuous functions (see \cite{BP} for instance). First we define regularized parameters as follows :
$$\tau_\e=\rho_\e\ast\tau,\quad\beta_\e=\rho_\e\ast\beta,\quad\text{and}\quad\forall y\geq0,\ \kappa_\e(.,y)=\rho_\e\ast\kappa(.,y),$$
where $\rho_\e(x)=\f1\e\rho(\f x\e)$ with $\rho\in{\mathcal C}_c^\infty((0,\infty)),$ positive and such that $\int_0^\infty\rho=1.$ Then we have the theorem
\begin{theorem}\label{th:KreinRutman}
Under assumptions \eqref{as:kappa1}-\eqref{as:betatauinf} on the parameters and for all $\e>0,$ there is a unique solution $\lb_\e\in\R$ and $\U_\e,\phi_\e\in{\mathcal C}^1([0,R])$ to the regularized eigenproblem
\beq\label{eq:regularized}
\left \{ \begin{array}{l}
\displaystyle \f{\p}{\p x} (\tau_\e(x) \U_\e(x)) + ( \beta_\e(x) + \lb_\e) \U_\e(x) = 2 \int_0^R\beta_\e(y)\kappa_\e(x,y) \U_\e(y)\,dy,\qquad 0<x<R,
\\
\\
\tau_\e\U_\e(x=0)=\delta\int_0^R\U_\e(y)\,dy,\qquad \U_\e(x)>0 , \qquad \int_0^R \U_\e(x)dx =1,
\\
\\
\displaystyle -\tau_\e(x) \f{\p}{\p x} \phi_\e(x) + ( \beta_\e(x) + \lb_\e) \phi_\e(x) - 2 \beta_\e(x) \int_0^R\kappa_\e(y,x) \phi_\e(y)\,dy = \tau_\e(0)\delta\phi_\e(0),\quad0<x<R,
\\
\\
\phi_\e(R)=0, \qquad \phi_\e(x)>0 , \qquad \int_0^R \phi_\e(x)\U_\e(x)dx =1.
\end{array} \right.
\eeq
\end{theorem}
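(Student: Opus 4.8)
The plan is to apply the Krein--Rutman theorem to an appropriate positive compact operator on $\mathcal{C}([0,R])$, built from the regularized, truncated problem. First I would fix $\e>0$ and rewrite the direct equation in \eqref{eq:regularized} as an integral fixed-point problem: for a given right-hand side, solving the first-order ODE in $x$ with the boundary condition $\tau_\e(0)\U_\e(0)=\delta\int_0^R\U_\e$ amounts to inverting a transport operator. Since $\tau_\e\geq\mu>0$ is smooth and bounded below on $[0,R]$, the characteristics are well-defined and the solution operator is given by an explicit Duhamel-type formula. I would introduce a large constant $\lambda_*$ (a spectral shift) so that the operator $\mathcal{G}_\e$ sending $f\in\mathcal{C}([0,R])$ to the unique solution $g$ of
$$\f{\p}{\p x}(\tau_\e g) + (\beta_\e + \lambda_*) g = 2\int_0^R \beta_\e(y)\kappa_\e(x,y) g(y)\,dy + f,\qquad \tau_\e g(0) = \delta\int_0^R g$$
is well-defined, bounded, and maps into $\mathcal{C}^1([0,R])$. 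By Ascoli's theorem $\mathcal{G}_\e$ is compact on $\mathcal{C}([0,R])$. The shift $\lambda_*$ must be chosen large enough that the zeroth-order terms dominate, guaranteeing that $\mathcal{G}_\e$ is positive: if $f\geq0$ then $g\geq0$ (this uses $\kappa_\e,\beta_\e,\tau_\e\geq0$, $\delta\geq0$, and a maximum-principle / Grönwall argument along characteristics), and strongly positive on the interior because $\kappa_\e$ is a genuine mollification with positive support overlap.

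Next I would invoke the Krein--Rutman theorem: the compact strongly positive operator $\mathcal{G}_\e$ has a simple dominant eigenvalue $\nu_\e>0$ with a positive eigenfunction $\U_\e\in\mathcal{C}([0,R])$, unique up to normalization, and by elliptic (here ODE) regularity $\U_\e\in\mathcal{C}^1$. Setting $\lb_\e := \lambda_* - 1/\nu_\e$ and normalizing $\int_0^R\U_\e=1$ recovers the first two lines of \eqref{eq:regularized} with $\U_\e>0$ on $(0,R]$; the boundary condition $\tau_\e\U_\e(0)=\delta\int_0^R\U_\e=\delta$ follows by construction. For the adjoint problem, I would apply the same argument to the dual operator $\mathcal{G}_\e^*$ acting on $\mathcal{C}([0,R])$ — equivalently, solve the backward transport equation for $\phi_\e$ with terminal condition $\phi_\e(R)=0$ and the inhomogeneity $\tau_\e(0)\delta\phi_\e(0)$, which is again a first-order ODE run backward along characteristics. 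The dominant eigenvalue of $\mathcal{G}_\e^*$ coincides with $\nu_\e$ (transpose operators share spectrum), so the \emph{same} $\lb_\e$ appears, and one gets $\phi_\e\in\mathcal{C}^1([0,R])$, $\phi_\e>0$ on $[0,R)$. Finally, I would normalize by the coupling $\int_0^R\phi_\e\U_\e=1$, which is legitimate since $\int_0^R\phi_\e\U_\e>0$ by positivity of both eigenfunctions.

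Uniqueness of $\lb_\e$ and of $\U_\e,\phi_\e$ up to normalization is precisely the simplicity of the Krein--Rutman eigenvalue (algebraic multiplicity one), so no separate argument is needed beyond checking the hypotheses; alternatively one can reproduce the General Relative Entropy uniqueness argument already used in the main proof, which applies verbatim on $[0,R]$ since all parameters are now smooth and bounded.

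The main obstacle will be the careful bookkeeping needed to turn the non-local boundary condition $\tau_\e\U_\e(0)=\delta\int_0^R\U_\e$ into a genuine compact positive operator: unlike a homogeneous Dirichlet condition, this couples the trace at $x=0$ to a global integral, so the resolvent formula for $\mathcal{G}_\e$ must be set up as a fixed-point equation for the scalar $c:=\int_0^R g$ (solvable uniquely provided $\delta R<\mu$, which is exactly the standing hypothesis of Theorem \ref{th:KreinRutman}), and one must verify that the resulting operator still preserves the positive cone. Once this is in place, compactness and strong positivity are routine, and the Krein--Rutman machinery delivers all the claimed properties at once; the passage to the limit $\e\to0$ (yielding the genuinely truncated problem \eqref{eq:truncated}) is handled by the uniform estimates developed in Section \ref{subsec:estim}, which do not depend on $\e$.
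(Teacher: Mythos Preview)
Your proposal is correct and follows essentially the same route as the paper: introduce a spectral shift, construct the resolvent as a compact strongly positive operator on $\mathcal{C}([0,R])$ (the paper does this via a Banach contraction that handles the nonlocal integral term and the nonlocal boundary condition simultaneously, under the same hypothesis $\delta R<\mu$ you single out), and apply Krein--Rutman. For the adjoint the paper uses the change of variable $x\mapsto R-x$ to recast the problem as a forward transport equation on $\mathcal{C}([0,R])$ rather than invoking the abstract dual $\mathcal{G}_\e^*$ (which a priori only acts on measures); your parenthetical ``equivalently, solve the backward transport equation'' is exactly that device.
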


\begin{proof}
We follow the proof of \cite{BP}. We define linear operators on $E:={\mathcal C}^0([0,R])$ to apply the Krein-Rutman theorem.

\paragraph{Direct equation.}
For $\nu>0$ we consider the following equation on $E$
\beq\label{eq:defopA}\left\{\begin{array}{l}
\dis\f{\p}{\p x} (n(x)) + \f{\nu+\beta_\e(x)}{\tau_\e(x)} n(x) - 2 \int_0^R\f{\beta_\e(y)}{\tau_\e(y)}\kappa_\e(x,y) n(y)\,dy = \f{f(x)}{\tau_\e(x)},\qquad 0\leq x\leq R,
\\
\\
n(x=0)=\delta\int_0^R\f{n(y)}{\tau_\e(y)}dy,\end{array}\right.\eeq
and we prove that the linear operator $A:f\mapsto n$ (solution of \eqref{eq:defopA}) satisfies to the assumptions of the Krein-Rutman theorem.

\paragraph{\it First step: construction of A.}
Fix $f\in E$ and for $m\in E,$ we define $n=T(m)\in E$ as the (explicit) solution to
$$\left\{\begin{array}{l}
\dis\f{\p}{\p x} (n(x)) + \f{\nu+\beta_\e(x)}{\tau_\e(x)} n(x) = 2 \int_0^R\f{\beta_\e(y)}{\tau_\e(y)}\kappa_\e(x,y) m(y)\,dy + \f{f(x)}{\tau_\e(x)},\qquad 0\leq x\leq R,
\\
\\
n(x=0)=\delta\int_0^R\f{m(y)}{\tau_\e(y)}dy,\end{array}\right.$$
We prove that $T$ is a strict contraction. Therefore it has a unique fixed point thanks to the Banach-Picard theorem. This fixed point is a solution to \eqref{eq:defopA}.

In order to prove that $T$ is a strict contraction, we consider $m_1$ and $m_2$ two functions in $E,$ we compute for $n=n_1-n_2,\ m=m_1-m_2,$
$$\left\{\begin{array}{l}
\dis\f{\p}{\p x} (n(x)) + \f{\nu+\beta_\e(x)}{\tau_\e(x)} n(x) = 2 \int_0^R\f{\beta_\e(y)}{\tau_\e(y)}\kappa_\e(x,y) m(y)\,dy ,\qquad 0\leq x\leq R,
\\
\\
n(x=0)=\delta\int_0^R\f{m(y)}{\tau_\e(y)}dy,\end{array}\right.$$
therefore
$$\left\{\begin{array}{l}
\dis\f{\p}{\p x} |n(x)| + \f{\nu+\beta_\e(x)}{\tau_\e(x)} |n(x)| \leq 2 \int_0^R\f{\beta_\e(y)}{\tau_\e(y)}\kappa_\e(x,y) |m(y)|\,dy ,\qquad 0\leq x\leq R,
\\
\\
n(x=0)\leq\delta\int_0^R\f{|m(y)|}{\tau_\e(y)}dy.\end{array}\right.$$
After integration, we obtain
$$|n(x)|e^{\int_0^x\f{\mu+\beta_\e}{\tau_\e}}\leq\delta\int_0^R\f{|m(y)|}{\tau_\e(y)}dy+\int_0^xe^{\int_0^{x'}\f{\nu+\beta_\e}{\tau_\e}}\int_0^R\f{\beta_\e(y)}{\tau_\e(y)}\kappa_\e(x',y)|m(y)|\,dydx'$$
and thus
\begin{eqnarray*}
|n(x)|&\leq&\delta\int_0^R\f{|m(y)|}{\tau_\e(y)}dy+\int_0^xe^{-\int_{x'}^x\f{\nu+\beta_\e}{\tau_\e}}\int_0^R\f{\beta_\e(y)}{\tau_\e(y)}\kappa_\e(x',y)|m(y)|\,dydx'\\
&\leq&\|m\|_E\,\f1\mu\Bigl[\delta R+\int_0^xe^{-\int_{x'}^x\f{\nu+\beta_\e}{\tau_\e}}\int_0^R\beta_\e(y)\kappa_\e(x',y)\,dydx'\Bigr]\\
&\leq&\|m\|_E\,\f1\mu\Bigl[\delta R+\Bigl\|\int_0^R\beta_\e(y)\kappa_\e(.,y)dy\Bigr\|_{L^\infty}\int_0^xe^{-\f\nu{\|\tau_\e\|_{L^\infty}}(x-x')}\,dx'\Bigr]\\
&\leq&\|m\|_E\,\underbrace{\f1\mu\Bigl[\delta R+\nu^{-1}\|\tau_\e\|_{L^\infty}\Bigl\|\int_0^R\beta_\e(y)\kappa_\e(.,y)dy\Bigr\|_{L^\infty}\Bigr]}_{:=k}.
\end{eqnarray*}
Because $\delta R<\mu$ by assumption, we can choose $\nu$ large so that $k<1$ and we obtain
$$\|n\|_E\leq k\|m\|_E.$$
Thus $T$ is a strict contraction and we have proved the existence of a solution to \eqref{eq:defopA}.

\paragraph{\it Second step: A is continuous.}
This relies on a general argument which in fact shows that the linear mapping $A$ is Lipschitz continuous. Indeed, arguing as above
$$|n(x)|e^{\int_0^x\f{\nu+\beta_\e}{\tau_\e}}\leq\delta\int_0^R\f{|n(y)|}{\tau_\e(y)}dy+\int_0^xe^{\int_0^{x'}\f{\nu+\beta_\e}{\tau_\e}}\int_0^R\f{\beta_\e(y)}{\tau_\e(y)}\kappa_\e(x',y)|n(y)|\,dydx'+\int_0^xe^{\int_0^{x'}\f{\nu+\beta_\e}{\tau_\e}}\f{|f(x')|}{\tau_\e(x')}\,dx',$$
and thus
$$|n(x)|\leq k\|n\|_E+\int_0^R\f{|f(x')|}{\tau_\e(x')}\,dx'\leq k\|n\|_E+\f R\mu\|f\|_E.$$
This indeed proves that
$$\|n\|_E\leq\f{R}{\mu(1-k)}\|f\|_E.$$

\paragraph{\it Third step: A is strongly positive.}
For $f\geq0,$ the operator $T$ of the first step maps $m\geq0$ to $n\geq0.$ Therefore the fixed point $n$ is nonnegative. In other words $n=A(f)\geq0.$ If additionally $f$ does not vanish, then $n$ does not vanish either. Therefore $n(0)=\delta\int_0^R\f{n(y)}{\tau_\e(y)}dy>0$ and thus
$$n(x)\geq n(0)+e^{-\int_0^x\f{\nu+\beta_\e}{\tau_\e}}\int_0^xe^{-\int_0^{x'}\f{\nu+\beta_\e}{\tau_\e}}\f{f(x')}{\tau_\e(x')}\,dx'>0.$$

\paragraph{\it Fourth step: A is compact.}
For $\|f\|_E\leq1,$ the third step proves that $n$ is bounded in $E$ and thus
$$\f{\p}{\p_x}n=-\f{\nu+\beta_\e}{\tau_\e}n+\int\f{\beta_\e(y)}{\tau_\e(y)}\kappa(x,y)n(y)dy+\f{f}{\tau_\e}$$
is also bounded in $E.$ Therefore by the Ascoli-Arzela theorem the family $n$ is relatively compact in $E.$

\

\paragraph{Adjoint equation.}
A function $\phi$ is a solution to the adjoint equation of \eqref{eq:regularized} if and only if $\tilde\phi(x):=\phi(R-x)$ satisfies
\beq\label{eq:phisym}\left \{ \begin{array}{l}
\displaystyle \tilde\tau_\e(x) \f{\p}{\p x} \tilde\phi(x) + ( \tilde\beta_\e(x) + \lb_\e) \tilde\phi(x) - 2 \tilde\beta_\e(x) \int_0^R\kappa_\e(y,R-x) \tilde\phi_\e(y)\,dy = \delta\tilde\phi_\e(R),\quad0<x<R,
\\
\\
\tilde\phi_\e(0)=0,
\end{array} \right.\eeq
where $\tilde\tau_\e(x)=\tau_\e(R-x)$ and $\tilde\beta_\e(x)=\beta_\e(R-x).$ Then the same method than for the direct equation give the result, namely the existence of $\lb$ and $\tilde\phi$ solution to \eqref{eq:phisym}.

\

Finally we have proved existence of $(\lb_\U,\U_\e)$ and $(\lb_\phi,\phi_\e)$ solution to the direct and adjoint equations of \eqref{eq:regularized}. It remains to prove that $\lb_\U=\lb_\phi$ but it is nothing but integrating the direct equation against the ajoint eigenvector, what gives $$\lb_\U\int\U_\e\phi_\e=\lb_\phi\int\U_\e\phi_\e.$$

\qed
\end{proof}

\

To have existence of solution for \eqref{eq:truncated}, it remains to do $\e\to0.$ For this we can prove uniform bounds in $L^\infty$ for $\U_\e$ and $\phi_\e$ because we are on the fixed compact $[0,R].$ Then we can extract subsequences which converge $L^\infty *-$weak toward $\U$ and $\phi,$ solutions to \eqref{eq:truncated} because $\tau_\e$ and $\beta_\e$ converge in $L^1$ toward $\tau$ and $\beta.$ Concerning $\kappa_\e,$ we have that for all $\varphi\in{\mathcal C}_c^\infty,\ \int\varphi(x)\kappa_\e(x,y)dx\longrightarrow\int\varphi(x)\kappa(x,y)dx\quad\forall y,$ and it is sufficient to pass to the limit in the equations.

\

\section{Maximum principle}

\begin{lemma}\label{lm:supersolution}
If there exists $A_0>0$ such that $\overline\phi\geq\phi$ on $[0,A_0]$ and $\overline\phi$ a supersolution of \eqref{eq:eigenproblem} on $[A_0,R]$ with $\overline\phi(R)\geq\phi(R),$ then $\overline\phi\geq\phi$ on $[0,R].$
\end{lemma}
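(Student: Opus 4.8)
The plan is to establish this as a comparison (minimum) principle, after a ground-state substitution that neutralizes the nonlocal term. Set $w:=\overline\phi-\phi$. Subtracting the adjoint equation of \eqref{eq:eigenproblem} satisfied by $\phi$ from the supersolution inequality satisfied by $\overline\phi$ gives, on $[A_0,R]$,
\[-\tau(x)\,w'(x)+(\beta(x)+\lb)\,w(x)\ \geq\ 2\beta(x)\int_0^x\kappa(y,x)\,w(y)\,dy .\]
The obstruction to a direct Gr\"onwall argument is that the nonlocal term feeds in the values of $w$ at \emph{smaller} sizes $y$, while the transport part propagates from large to small $x$, so the problem is genuinely implicit. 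To circumvent this I would divide by $\phi$, which is $>0$ on $(0,\infty)$ by Lemma \ref{lm:positivity}: writing $w=v\phi$ on $[A_0,R]$ and using the equation for $\phi$ to eliminate $-\tau\phi'+(\beta+\lb)\phi=2\beta\int_0^x\kappa(y,x)\phi(y)\,dy$, the inequality collapses to the sign-definite form
\[\tau(x)\,\phi(x)\,v'(x)\ \leq\ 2\beta(x)\int_0^x\kappa(y,x)\,\phi(y)\,\bigl(v(x)-v(y)\bigr)\,dy\qquad\text{on }[A_0,R],\]
which I will call $(\star)$. The hypotheses translate into $v\geq0$ on $[0,A_0]$ (since $w\geq0$ and $\phi>0$ there) and $v(R)\geq0$, and the goal becomes $v\geq0$ on $[A_0,R]$.

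The core step is then a minimum principle. Suppose $m:=\min_{[A_0,R]}v<0$; since $v\geq0$ at both ends of $[A_0,R]$ the minimum is attained at an interior point, and I take $x^*$ to be the leftmost such point, so $v'(x^*)=0$, $v(y)\geq m$ for all $y\in[0,x^*]$, and $v(y)>m$ \emph{strictly} for every $y\in(0,x^*)$ (because $v>m$ on $[A_0,x^*)$ and $v\geq 0>m$ on $(0,A_0)$). Evaluating $(\star)$ at $x^*$ yields $0\leq 2\beta(x^*)\int_0^{x^*}\kappa(y,x^*)\phi(y)\bigl(m-v(y)\bigr)\,dy$, an inequality whose integrand is $\leq0$ everywhere and $<0$ throughout $(0,x^*)$, where $\phi>0$. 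Here assumption \eqref{as:kappa3} is exactly what is needed: a measure $\kappa(\cdot,x^*)$ carried by $\{0,x^*\}$ is forced by \eqref{as:kappa1}--\eqref{as:kappa2} to equal $\tfrac12(\delta_0+\delta_{x^*})$, which has $\int (x/x^*)^2\kappa=\tfrac12$ and is excluded; hence $\kappa(\cdot,x^*)$ charges $(0,x^*)$ and the integral above is strictly negative, which forces $\beta(x^*)=0$. But in the situations where the lemma is used the supersolution only exists on a half-line $[A_0,\infty)$ with $A_0\geq b$ --- indeed \eqref{eq:sursolution2} cannot hold below $b$, where $x\beta/\tau\equiv0$ --- so $x^*\in(A_0,R)\subset(b,\infty)$, and on $(b,\infty)$ one has $\beta>0$ (pointwise if $\beta$ is continuous, and in any case $\beta_\e>0$ for the regularized coefficients of Theorem \ref{th:KreinRutman}). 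This contradicts $\beta(x^*)=0$; hence $m\geq0$.

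In practice I would carry out $(\star)$ and the minimum-point step first on the regularized problem \eqref{eq:regularized}, where $\tau_\e,\beta_\e,\kappa_\e$ are smooth, $\phi_\e\in\mathcal C^1$, $\tau_\e\geq\mu>0$ and $\beta_\e>0$ on $(b,\infty)$, so the pointwise use of $(\star)$ at $x^*$ is legitimate; this gives $\overline\phi\geq\phi_\e$ on $[0,R]$, and passing to the limit $\e\to0$ (then $\eta,\delta\to0$) propagates the inequality to $\phi_\eta^\delta$ and to the solution of \eqref{eq:eigenproblem}. For the bare statement under the paper's low-regularity assumptions the only change is that the evaluation of $(\star)$ at the single point $x^*$ should be replaced by an integrated version over a small one-sided interval around $x^*$ --- legitimate because $v$ is locally Lipschitz ($\phi\in W^{1,\infty}_{loc}$ and $\tau\phi$ is bounded away from $0$ on compact subsets of $(0,\infty)$ by \eqref{as:taupositivity}) --- and, where $\beta\equiv0$ on a subinterval, the inequality for $w$ is the pure transport inequality $-\tau w'+\lb w\geq0$, for which $w\geq0$ follows at once from an integrating factor. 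I expect that making this minimum principle fully rigorous under merely measurable/$L^1_{loc}$ coefficients, in particular controlling the behaviour near the zeros of $\beta$, is the only delicate point; the substitution $w=v\phi$ is the decisive move, turning the awkward ``upstream'' nonlocal coupling into the benign monotone form $(\star)$.
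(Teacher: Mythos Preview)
Your approach is genuinely different from the paper's. The paper does not use a ground-state substitution or a pointwise minimum principle; instead it argues by duality with the direct eigenvector $\U$. One multiplies the inequality satisfied by $\phi-\overline\phi$ by $\1_{\phi\geq\overline\phi}$, integrates the resulting inequality for $(\phi-\overline\phi)_+$ against $\U$ on $[0,R]$, and subtracts from this the equation on $\U$ tested against $(\phi-\overline\phi)_+$. The transport and nonlocal terms cancel exactly by duality, leaving
\[
\delta\,\phi(0)\int_0^R\1_{\phi\geq\overline\phi}(x)\,\U(x)\,dx\ \leq\ 0,
\]
and since in the truncated setting $\tau_\eta\geq\eta>0$ gives $\f1{\tau_\eta}\in L^1_0$ and hence $\phi(0)>0$ by Lemma~\ref{lm:positivity}, the indicator must vanish. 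No analysis of $\kappa$ at a particular size, no need for $\beta(x^*)>0$, no pointwise evaluation.

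Your route can be made to work but carries more baggage. Three points. (i) The lemma is applied to the truncated problem \eqref{eq:truncated}, where $\phi(R)=0$; hence $v=w/\phi$ is undefined at $R$. Since $\overline\phi(R)>0$ in the application one has $v\to+\infty$ there and the minimum is indeed interior, but this needs saying. (ii) In that same truncated problem the equation for $\phi$ carries the source $\delta\phi(0)$, so your $(\star)$ acquires an extra $+v(x)\,\delta\phi(0)$ on the right; at $x^*$ this is $m\,\delta\phi(0)<0$ and already yields the contradiction, making your appeals to $\beta(x^*)>0$ and to \eqref{as:kappa3} superfluous. (iii) Your argument for $\beta(x^*)>0$ imports the extra condition $A_0\geq b$ from the specific construction of the supersolution in the fifth estimate, which is not part of the lemma's hypothesis. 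The paper's duality proof sidesteps all three issues and exploits the strict positivity of the source $\delta\phi(0)$ in a single integrated step rather than at a point.
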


\begin{proof}The proof is based on the same tools than to prove uniqueness (see above) or to establish GRE principles (see \cite{MMP1,MMP2} for instance).\\
We know that $\overline\phi\geq\phi$ on $[0,A_0]$ and that $\overline\phi$ is a supersolution to the equation satisfied by $\phi$ on $[A_0,R],\ i.e.$ there exists a function $f\geq\delta\phi(0)$ such that
$$-\tau(x)\f{\p}{\p x}\overline\phi(x)+(\lb+\beta(x))\overline\phi(x)=2\beta(x)\int_0^x\kappa(y,x)\overline\phi(y)\,dy+f(x),\quad\forall x\in[A_0,R].$$
So we have for all $x\in[A_0,R]$
$$-\tau(x)\f{\p}{\p x}(\phi(x)-\overline\phi(x))+(\lb+\beta(x))(\phi(x)-\overline\phi(x))=2\beta(x)\int_0^x\kappa(y,x)(\phi(y)-\overline\phi(y))\,dy-f(x).$$
Then, multiplying by $\1_{\phi\geq\overline\phi},$ we obtain (see \cite{BP} for a justification)
$$-\tau(x)\f{\p}{\p x}(\phi-\overline\phi)_+(x)+(\lb+\beta(x))(\phi-\overline\phi)_+(x)\leq2\beta(x)\int_0^x\kappa(y,x)(\phi-\overline\phi)_+(y)\,dy-f(x)\1_{\phi\geq\overline\phi}(x),$$
and this inequality is satisfied on $[0,R]$ since $(\phi-\overline\phi)_+\equiv0$ on $[0,A_0].$\\
If we test against $\U$ we have, using the fact that $\phi(R)=0<\overline\phi(R),$
\begin{eqnarray*}
\lefteqn{\int_0^R(\phi-\overline\phi)_+(x)\f{\p}{\p x}(\tau(x)\U(x))\,dx+\int_0^R(\lb+\beta(x))(\phi-\overline\phi)_+(x)\U(x)\,dx}\\
&\leq&2\int_0^R(\phi-\overline\phi)_+(y)\int_y^R\beta(x)\kappa(y,x)\U(x)\,dxdy-\int_0^Rf(x)\1_{\phi\geq\overline\phi}(x)\U(x)\,dx.
\end{eqnarray*}
But if we test the equation \eqref{eq:eigenproblem} satisfied by $\U$ against $(\phi-\overline\phi)_+,$ we find
\begin{eqnarray*}
\lefteqn{\int_0^R(\phi-\overline\phi)_+(x)\f{\p}{\p x}(\tau(x)\U(x))\,dx+\int_0^R(\lb+\beta(x))(\phi-\overline\phi)_+(x)\U(x)\,dx}\hspace{2cm}\\
&=&2\int_0^R(\phi-\overline\phi)_+(y)\int_y^R\beta(x)\kappa(y,x)\U(x)\,dxdy,
\end{eqnarray*}
and finally, substracting,
$$0\leq-\int_0^Rf(x)\1_{\phi\geq\overline\phi}(x)\U(x)\,dx,$$
so
$$\delta\phi(0)\int_0^R\1_{\phi\geq\overline\phi}(x)\U(x)\,dx\leq0$$
and this can hold only if $\1_{\phi\geq\overline\phi}\equiv0$ or $\phi(0)=0.$ But we deal with the truncated problem with $\tau(x)\geq\eta>0,$ so $\f1\tau\in L^1_0$ and $\phi(0)>0$ thanks to the lemma \ref{lm:positivity}. Thus $\1_{\phi\geq\overline\phi}\equiv0$ and the lemma \ref{lm:supersolution} is proved.
\qed
\end{proof}

\bigskip

\bibliographystyle{plain}
\bibliography{Prion}

\end{document}